\newtheorem{theorem}{Theorem}[section]
\newtheorem{lemma}[theorem]{Lemma}
\newtheorem{proposition}[theorem]{Proposition}
\newtheorem{corollary}[theorem]{Corollary}
\newtheorem{example}[theorem]{Example}
\theoremstyle{remark}
\newtheorem{remark}[theorem]{Remark}
\numberwithin{equation}{section}
\def\dim{\mathop{\rm dim}\nolimits}
\def\Rea{\mathop{\rm Re}\nolimits}
\def\Ima{\mathop{\rm Im}\nolimits}
\def\Skew{\mathop{\rm Skew}\nolimits}
\def\O{\mathop{\rm O}\nolimits}
\def\SO{\mathop{\rm SO}\nolimits}
\def\GL{\mathop{\rm GL}\nolimits}
\def\U{\mathop{\rm U}\nolimits}
\renewcommand*\env@matrix[1][*\c@MaxMatrixCols c]{%
  \hskip -\arraycolsep
  \let\@ifnextchar\new@ifnextchar
  \array{#1}}
\begin{document}

\title[Isotropy groups]{Isotropy groups of the action of orthogonal similarity on skew-symmetric and on complex orthogonal matrices}
\author{Tadej Star\v{c}i\v{c}}
\address{Faculty of Education, University of Ljubljana, Kardeljeva Plo\v{s}\v{c}ad 16, 1000 Lju\-blja\-na, Slovenia}
\address{Institute of Mathematics, Physics and Mechanics, Jadranska
  19, 1000 Ljubljana, Slovenia}
\email{tadej.starcic@pef.uni-lj.si}
\subjclass[2000]{15A24, 15B57, 51H30}
\date{December 16, 2025}


\keywords{
isotropy groups, complex orthogonal matrix, skew-symmetric matrix, matrix equation, Toeplitz matrix\\
\indent Research supported by grants P1-0291 
from ARRS, Republic of Slovenia.}

\begin{abstract} 
We compute and analyze isotropy subgroups of the complex orthogonal group with respect to the similarity transformation on itself and on skew-symmetric matrices. Their group structure is related to a  
group of certain nonsingular block matrices whose blocks are rectangular block Toeplitz.  
\end{abstract}

\maketitle

\section{Introduction}

Let 
$\O_n(\mathbb{C})\subset \GL_n(\mathbb{C})$ be the subgroup of \emph{orthogonal}
matrices in the group of com\-plex nonsingular $n$-by-$n$ matrices $\GL_n(\mathbb{C})$;
$Q$ is orthogonal precisely when $Q^{-1}=Q^{T}$. By $\SO_n(\mathbb{C})$ we further denote its so-called special orthogonal subgroup of all $Q\in \O_n(\mathbb{C})$ with $\det Q=1$, and $\O_n(\mathbb{R})$ is the real orthogonal subgroup. Note that complex orthogonal group is much different 
from the unitary group $\U(n)\subset \GL_n(\mathbb{C})$ consisting of all $U$ with $U^{-1}=\overline{U}^{T}$.
Also, let $\Skew_n (\mathbb{C})$ be the 
vector space of $n$-by-$n$ \emph{skew-symmetric} matrices; $A$ is skew-symmetric if and only if $A=-A^{T}$.

We consider the action of \emph{orthogonal similarity} (coinciding with \emph{orthogonal congruence})
defined on $\Skew_n(\mathbb{C})$ and on $\O_n(\mathbb{C})$, acting by $(Q,M)\mapsto Q^{-1}MQ=Q^{T}MQ$:
\begin{align}\label{aos} 
&\Phi_1\colon \O_n(\mathbb{C})\times \Skew_n(\mathbb{C})\to \Skew_n(\mathbb{C}),\\
&\Phi_2\colon \O_n(\mathbb{C})\times \O_n(\mathbb{C})\to \O_n(\mathbb{C}),
\label{aoso}
\end{align}
More generally, the congruence transformations play an important role in the study of the geometry of skew-symmetric matrices by the result of Hua \cite[Theorem 10]{Hua45} 
(cf. \cite
{Wan}). 
Also,
(\ref{aos}) is a representation of $\O_n(\mathbb{C})$ as a classical group on a vector space $\Skew_n(\mathbb{C})$ (see a monograph by Weyl \cite{Weyl}), while (\ref{aoso}) is a complex Lie group action of $\O_n(\mathbb{C})$ on itself by conjugation.

Important pieces of information about any action are the \emph{isotropy groups} (see a textbook 
\cite{Milne} for their basic properties).
We denote the isotropy group at an $n$-by-$n$ skew-symmetric or orthogonal matrix $M$ (with respect to (\ref{aos}) or (\ref{aoso})) by:
%
\begin{align*}
\Sigma_{M}:=\{Q\in \O_n(\mathbb{C})\mid Q^{T}M Q=M\}=\{Q\in \O_n(\mathbb{C})\mid Q^{T}M Q=M\}.
\end{align*}
The aim of this paper is to describe the group structure of $\Sigma_{M}$ 
and to find its dimensions. The later also yields the codimensions of the corresponding orthogonal similarity orbits; 
for more on orbit stratification under similarity or 
congruence and related topics see e.g. \cite{EEK}, \cite{S3}, \cite{TeranDopi2}.

In the generic case, when all eigenvalues of $M$ 
are simple, it is relatively easy to 
see that $\Sigma_{M}$ 
is 
equal to $\bigoplus_{j=1}^{n/2}\SO_2(\mathbb{C})$ for $n$ even and to $\bigoplus_{j=1}^{(n-1)/2}\SO_2(\mathbb{C})\oplus \pm 1$ for $n$ odd (see Proposition \ref{stabs11}). However, the situation is much more delicate in the nongeneric case, where isotopy groups are conjugate to a group of certain nonsingular block matrices whose blocks are further rectangular block Toeplitz (Theorem \ref{stabw}). For instance, it is already an interesting problem to obtain the isotropy group for 
$M=\bigoplus _{j=1}^{m}
\begin{bsmallmatrix}
0 & 1+i & 0\\
-1-i & 0 & -1+i\\
0 & 1-i & 0
\end{bsmallmatrix}
$. 
Although the general approach here seems related to that in our papers \cite{TSOC} and \cite{TSOS}, 
in which the iso\-tro\-py gro\-ups under ortho\-gonal *congruence on Hermitian and orthogonal similarity 
on symmetric matrices were studied, respectively, the\-re are a few quite subtle differences between the problem considered in this paper 
and our previous work. They give rise to certain new phenomena and also make the ana\-ly\-sis
substantially more difficult.

We add that the isotropy groups of real skew-symmetric and real orthogonal matrices
with respect to real orthogonal similarity are easier to compute; they are isomorphic to direct sums of the form $\bigoplus_{j=1}^{N}\U (n_j)\oplus \O_m(\mathbb{R})$ (Corollary \ref{stabr}).

To some extent isotropy groups of (\ref{aos}) could help us solve the problem of simultaneous reduction under congruence for a triple $(A,B,C)$ with $A,B$ skew-symmetric and $C$ nonsingular symmetric. By applying the Autonne-Takagi factorization and using a suitable orthogonal congruence transformation we 
first obtain $(A',K,I)$ with the identity $I$ and the skew-symmetric normal form $K$. 
Next, $A'$ 
is simplified by using the isotropy group of $K$. This reduction might then be applied to solve a system of transpose-Sylvester matrix equations $AX+X^TA=C$, $BX+X^TB=D$ ($D$ symmetric); for more on these equations see e.g. \cite{DK}, \cite{DKS}.

%
%
%
%
%
\section{The main results}\label{secIG}

We recall the well-known canonical forms for skew-symmetric and for orthogonal matrices under orthogonal similarity; see e.g. \cite[Ch. XI]{Gant} and \cite{Well}. Note that two skew-symmetric or two orthogonal matrices are orthogonally similar if and only if they are similar. 
The Jordan canonical form of a skew-symmetric matrix $A$ has a special form; it can only contain blocks of the form $J_m(0)$ with $m$ odd, and pairs of blocks of the form $J_m(\lambda)\oplus J_m(-\lambda)$,
in which either $\lambda$ is nonzero (and complex) or $\lambda$ vanishes and $m$ is even:
\small
\begin{align}\label{JFs}
\mathcal{J}(A)=\bigoplus_j \big( J_{\alpha_j}(\lambda_j)\oplus J_{\alpha_j}(-\lambda_j)\big)\oplus \bigoplus_k \big(J_{2\beta_k}(0)\oplus J_{2\beta_k}(0)\big)\oplus\bigoplus_l J_{2\gamma_l-1}(0),\quad \lambda_j\in \mathbb{C}^{*},
\end{align}
\normalsize
in which the basic Jordan block with the eigenvalue $\lambda$ is denoted by
\begin{align}\label{Jblock}
  J_m(\lambda):=\begin{bsmallmatrix}
                                                      \lambda    &  1       & \;     & 0    \\
						      \;     & \lambda     & \ddots & \;    \\     
						      \;     & \;      & \ddots &  1     \\
                                                      0     & \;      & \;     & \lambda   
                                   \end{bsmallmatrix},\quad																	
																	\lambda\in \mathbb{C}\qquad (m\textrm{-by-}m).
\end{align}
The corresponding skew-symmetric canonical form for $A$ under similarity is then
\begin{equation}\label{NF1s}
\mathcal{K}(A)=\bigoplus_{j=1}^{} 
K_{\alpha_j}(\lambda_j)
\oplus
\bigoplus_{k=1}^{} 
K_{2\beta_k}(0)
\oplus
\bigoplus_l L_{2\gamma_l-1},
\end{equation}
where we denoted
\begin{align}
\label{Km}
&K_{m}(\lambda):=
\begin{bmatrix}
M_{m} & N_{m}(\lambda)\\
-N_{m}(\lambda) & -M_{m}
\end{bmatrix},\qquad \lambda\in \mathbb{C}\\
%
&M_m:=
\frac{1}{2}
\begin{bsmallmatrix}
0  & 1 &             & 0 \\
-1 & \ddots &   \ddots        &   \\
   & \ddots  &  \ddots & 1 \\
0   &          & -1    & 0 \\
\end{bsmallmatrix}
,\qquad
N_m(\lambda):=
\frac{i}{2}
\begin{bsmallmatrix}
0  &                &    1  & 2\lambda\\
 &   \iddots           &   \iddots     & 1 \\
1   &  \iddots  & \iddots &  \\
2\lambda   & 1          &     & 0 \\
\end{bsmallmatrix},\qquad (m\textrm{-by-}m),\nonumber
\end{align}
\begin{equation}\label{Lm}
L_{2m-1}:=
\frac{1}{2}
\left(
\begin{bsmallmatrix}
0  & 1 &             & 0 \\
-1 & \ddots &   \ddots        &   \\
   & \ddots  &  \ddots & -1 \\
0   &          & 1    & 0 \\
\end{bsmallmatrix}
+i
\begin{bsmallmatrix}
0  &                &    1  & 0\\
 &   \iddots           &   \iddots     & 1 \\
-1   &  \iddots  & \iddots &  \\
0   & -1          &     & 0 \\
\end{bsmallmatrix}
\right), \qquad \big((2m-1)\textrm{-by-}(2m-1)\big);
\end{equation}
the first $m-1$ entries on the first upper (lower) diagonal of $L_{2m-1}$ in (\ref{Lm}) are equal to plus (minus) one half and the following $m-1$ entries are minus (plus) one half, while the first $m-1$ entries on the first upper (lower) anti-diagonal are equal to plus (minus) $\frac{i}{2}$ and the following $m-1$ entries are plus (minus) $\frac{i}{2}$.
For a tridiagonal skew-symmetric normal form check \cite{Djok2}.

Next, let $Q$ be a complex orthogonal matrix. Its Jordan canonical form can only contain blocks $J_m(\pm 1)$ with $m$ odd or pairs of block $J_{m}(\lambda)\oplus J_{m}(\lambda^{-1})$, in which either $\lambda\in \mathbb{C}\setminus \{0,\pm 1\}$ or $\lambda=\pm 1$ and $m$ is even:
\small
\begin{align}\label{JFo}
\mathcal{J}(Q)
= & \bigoplus_j \big( J_{\alpha_j}(\lambda_j)\oplus J_{\alpha_j}(\lambda_j^{-1})\big)
\oplus \bigoplus_k \big(J_{2\beta_k}(1)\oplus J_{2\beta_k}(1)\big)
\oplus\bigoplus_l J_{2\gamma_l-1}(1)\\
& \oplus \bigoplus_m \big(J_{2\delta_m}(-1)\oplus J_{2\delta_m}(-1)\big)
  \oplus\bigoplus_n J_{2\epsilon_n-1}(-1),\qquad \lambda_j\in \mathbb{C}\setminus\{0,\pm 1\}.\nonumber
\end{align}
\normalsize
Its correspponding orthogonal similarity canonical form is then the following:
\small
\begin{align}\label{NF2o}
\mathcal{O}(Q)
= 
\bigoplus_j  e^{K_{\alpha_j}(\varphi_j)}
\oplus \bigoplus_k e^{K_{2\beta_k}(0)}
\oplus\bigoplus_l e^{L_{2\gamma_l-1}}
 \oplus \bigoplus_m -e^{K_{2\delta_m}(0)}
\oplus\bigoplus_n  - e^{L_{2\epsilon_n-1}},
\end{align}
\normalsize
where $\lambda_j=e^{\varphi_j}$. We choose $-e^{K_{\delta_j}(0)}$ instead of $e^{K_{\delta_j}(i\pi)}$, since it is more suitable for our applications.

By transforming the equation $Q^{-1}MQ=M$ 
into a simple Sylvester equation $\mathcal{J}(M)X=X\mathcal{J}(M)$ with $X=SQS^{-1}$ for a transition matrix $S$, it 
is straightforward to obtain the following fact 
(cf. Proposition \ref{resAoXXA}):

\begin{proposition} \label{stabs11}
\begin{enumerate}[label= \arabic*.,ref=\arabic*,
leftmargin=20pt]
\item \label{stabs11a}
For $\pm\lambda_1,\ldots,\pm\lambda_m\in \mathbb{C}$ pairwise distinct let 
$\mathcal{K}^{}=\bigoplus_{j=1}^{m}\mathcal{K}_j^{}$ be of the form (\ref{NF1s}),
in which $\mathcal{K}_j^{}$ is a direct sum whose summands correspond to the eigenvalue $\lambda_j$ of $\mathcal{K}^{}$. It then follows that $\Sigma_{\mathcal{K}^{}}=\bigoplus_{j=1}^{m}\Sigma_{\mathcal{K}^{}_j}$. Furthermore, if
$\mathcal{K}_j{}= 
\begin{bsmallmatrix}
0 & i\lambda_j \\
-i\lambda_j & 0
\end{bsmallmatrix}
$ with $\lambda_j \neq 0$, then $\Sigma_{\mathcal{K}^{}_j}=\SO_2(\mathbb{C})$.

\item \label{stabs11ao}
Let $\lambda_1,\ldots,\lambda_m\in \mathbb{C}$ be pairwise distinct eigenvalues of 
$\mathcal{O}^{}=\bigoplus_{j=1}^{m}\mathcal{O}_j^{}$, which is of the form (\ref{NF2o}) and such that $\mathcal{O}_j^{}$ is a direct sum whose summands correspond to the eigenvalue $\lambda_j$. 
Then $\Sigma_{\mathcal{O}^{}}=\bigoplus_{j=1}^{m}\Sigma_{\mathcal{O}^{}_j}$. 
Moreover, when
$\mathcal{O}_j{}= 
\begin{bsmallmatrix}
\cos (\varphi_j) & \sin (\varphi_j) \\
-\sin (\varphi_j) & \cos (\varphi_j)
\end{bsmallmatrix}
$ with $\lambda_j =e^{i\varphi_j}$ such that $\varphi_j \in \mathbb{C}\setminus \{k\pi\}_{k\in \mathbb{Z}}$, we have $\Sigma_{\mathcal{O}^{}_j}=\SO_2(\mathbb{C})$.

\end{enumerate}

\end{proposition}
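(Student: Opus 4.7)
The plan is to reduce both parts to a structural statement about the commutant of $M$. Since $Q^T=Q^{-1}$ for $Q\in\O_n(\mathbb{C})$, the defining equation $Q^TMQ=M$ is equivalent to $MQ=QM$, so $\Sigma_M$ equals the intersection of $\O_n(\mathbb{C})$ with the commutant of $M$ in $\GL_n(\mathbb{C})$.

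For part \ref{stabs11a}, the hypothesis that the pairs $\pm\lambda_j$ are pairwise distinct implies that the spectra of the blocks $\mathcal{K}_j$ and $\mathcal{K}_k$ are disjoint whenever $j\neq k$. By the standard Sylvester argument (made explicit in the forthcoming Proposition~\ref{resAoXXA}), any $Q$ commuting with $\mathcal{K}$ is block-diagonal with respect to the decomposition $\mathcal{K}=\bigoplus_j\mathcal{K}_j$, i.e.\ $Q=\bigoplus_j Q_j$. Orthogonality then splits blockwise: $Q^TQ=I$ is equivalent to $Q_j^TQ_j=I$ for each $j$, and $Q^T\mathcal{K}Q=\mathcal{K}$ is equivalent to $Q_j^T\mathcal{K}_j Q_j=\mathcal{K}_j$ for each $j$, yielding $\Sigma_{\mathcal{K}}=\bigoplus_j\Sigma_{\mathcal{K}_j}$. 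Part \ref{stabs11ao} proceeds identically, using the pairwise distinctness of the eigenvalue pairs $\lambda_j,\lambda_j^{-1}$ in $\mathcal{O}$.

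It then remains to evaluate $\Sigma_{\mathcal{K}_j}$ and $\Sigma_{\mathcal{O}_j}$ in the specified $2$-by-$2$ cases. Since $\lambda_j\neq 0$, the eigenvalues $\pm\lambda_j$ of $\mathcal{K}_j$ are distinct, so its commutant in $\GL_2(\mathbb{C})$ is $2$-dimensional. A direct multiplication check shows that every rotation in $\SO_2(\mathbb{C})$ commutes with $\mathcal{K}_j$, giving $\SO_2(\mathbb{C})\subseteq\Sigma_{\mathcal{K}_j}$. For an improper orthogonal matrix, parametrized as $\begin{bsmallmatrix}c & s\\ s & -c\end{bsmallmatrix}$ with $c^2+s^2=1$, computing both products shows that commutation with $\mathcal{K}_j$ forces $c=s=0$, contradicting $c^2+s^2=1$; hence no such reflection lies in $\Sigma_{\mathcal{K}_j}$, and $\Sigma_{\mathcal{K}_j}=\SO_2(\mathbb{C})$. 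The same calculation applies to $\mathcal{O}_j$, with the hypothesis $\varphi_j\notin\{k\pi\}_{k\in\mathbb{Z}}$ ensuring the two eigenvalues $e^{\pm i\varphi_j}$ are distinct and ruling out improper commuting matrices.

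The main technical step is the block-diagonal reduction of the commutant under disjoint spectra; once Proposition~\ref{resAoXXA} supplies this, everything else is a short $2$-by-$2$ verification, so I do not expect any serious obstacle here.
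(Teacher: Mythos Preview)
Your proposal is correct and follows essentially the same approach as the paper. The paper only sketches the argument in one sentence—transform $Q^{-1}MQ=M$ into the Sylvester equation $\mathcal{J}(M)X=X\mathcal{J}(M)$ via a transition matrix and invoke Proposition~\ref{resAoXXA}—and your write-up is a faithful expansion of that sketch: you use the disjoint-spectra Sylvester argument to force block-diagonality, then handle the $2\times 2$ case by a direct commutation check on the two components of $\O_2(\mathbb{C})$, which is exactly the ``straightforward'' verification the paper alludes to (and your direct computation on $\mathcal{K}_j$, $\mathcal{O}_j$ sidesteps the explicit passage to Jordan form, a harmless shortcut).
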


The generic skew-symmetric and generic orthogonal matrices (i.e. with only simple eigenvalues) are thus easy to handle.

To present a detailed description of isotropy groups in the nongeneric case, we use matrices of a special block structure. Given $\alpha=(\alpha_1,\ldots,\alpha_N)$ with 
$\alpha_{1}>\ldots >\alpha_{N}$ 
and $\mu=(m_1,\ldots,m_N)$, 
let $\mathbb{T}^{\alpha,\mu}$ be formed of all nonsingular
$N$-by-$N$ block matrices with \emph{rectangular upper triangular Toeplitz} blocks of size $\alpha_r$-by-$\alpha_s$:
\begin{align}\label{0T0}
\mathcal{X}=[\mathcal{X}_{rs}]_{r,s=1}^{N}\in \mathbb{T}^{\alpha,\mu},\quad
&\mathcal{X}_{rs}=
\left\{
\begin{array}{ll}
\hspace{-2mm}[0\quad \mathcal{T}_{rs}], & \hspace{-1mm} \alpha_r<\alpha_s\\
\hspace{-2mm}\begin{bmatrix}
\mathcal{T}_{rs}\\
0
\end{bmatrix}, & \hspace{-1mm} \alpha_r>\alpha_s\\
\hspace{-2mm}\mathcal{T}_{rs},& \hspace{-1mm} \alpha_r=\alpha_s
\end{array}\right. \hspace{-1mm}, \quad
\mathcal{T}_{rs}=T(A_0^{rs},\ldots,A_{b_{rs}-1}^{rs}),
\end{align}
where 
$b_{rs}:=\min\{\alpha_s,\alpha_r\}$, $A_0^{rr}\in \GL_{m_r}(\mathbb{C})$, $A_j^{rs}\in \mathbb{C}^{m_r\times m_s}$. 
By $ \mathbb{C}^{m\times n}$ we denote the set of $m$-by-$n$  complex matrices, and 
let a block \emph{upper triangular Toeplitz} matrix be
\begin{align}\label{UTT}
T(A_0,\ldots,A_{n-1}):=\begin{bmatrix}
  A_{0} & A_{1}                       & \ldots &    A_{n-1}  \\
0       & \ddots   &  \ddots    & \vdots \\
 \vdots &   \ddots           & \ddots   &   A_1 \\
0              & \ldots  &  0      & A_0
\end{bmatrix},
\end{align}
where $A_0 ,\ldots,A_{n-1}$ are of the same size, $T(A_0,\ldots,A_{n-1})=[T_{jk}]_{j,k=1}^{n}$ with $T_{jk}=0$ for $j>k$ and $T_{(j+1)(k+1)}=T_{jk}$. When $A_0$ is the identity matrix, (\ref{UTT}) is upper \emph{unitriangu\-lar} Toeplitz. For example, a matrix of the form (\ref{0T0}) for $N=\alpha_2=2$, $\alpha_1=3$ is
\small
\[
\begin{bmatrix}[ccc|cc]
A & B & C &  F  &  G \\ 
0   & A & B     &  0  &   F \\
0   & 0   & A     &  0 & 0  \\ 
\hline
0     & H & J                         &  D  &  E \\  
0    & 0   & H              &  0    &   D    
\end{bmatrix}.
\]
\normalsize
The group structure of these matrices is the following.

\begin{proposition}\label{lemanilpo}
(\cite[Lemma 2.2]{TSOS})
The set $\mathbb{T}^{\alpha,\mu}$ defined by (\ref{0T0}) is a group.
Furthermore, $\mathbb{T}^{\alpha,\mu}=\mathbb{D}\ltimes \mathbb{U}$ is a semidirect product of subgroups, where $\mathbb{D}\subset \mathbb{T}^{\alpha,\mu}$ contains all nonsingular block diagonal matrices, and $\mathbb{U}\subset \mathbb{T}^{\alpha,\mu}$ is a unipotent normal subgroup consisting of all matrices having upper unitriangular Toeplitz diagonal blocks.
\end{proposition}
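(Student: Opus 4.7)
The plan is to first prove closure of $\mathbb{T}^{\alpha,\mu}$ under matrix multiplication, and then establish the semidirect decomposition $\mathbb{T}^{\alpha,\mu}=\mathbb{D}\ltimes\mathbb{U}$; closure under inversion will follow from these two facts.

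For the multiplicative closure, given $\mathcal{X},\mathcal{Y}\in\mathbb{T}^{\alpha,\mu}$ I would compute $(\mathcal{X}\mathcal{Y})_{rt}=\sum_{s=1}^N \mathcal{X}_{rs}\mathcal{Y}_{st}$ and verify that the sum again has the padded rectangular upper triangular Toeplitz form required by (\ref{0T0}) for sizes $\alpha_r\times\alpha_t$. The core observation is that the algebra of $n$-by-$n$ upper triangular Toeplitz matrices with entries in $\mathbb{C}^{m\times m}$ is closed under multiplication, and the three padded forms in (\ref{0T0}) are natural embeddings of such Toeplitz algebras into the rectangular case. A case analysis on the ordering among $\alpha_r,\alpha_s,\alpha_t$ then shows that each term $\mathcal{X}_{rs}\mathcal{Y}_{st}$ lies in the correct padded subspace for the pair $(\alpha_r,\alpha_t)$, and the sum over $s$ stays in that subspace. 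Nonsingularity of the product is automatic.

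Next, I would decompose any $\mathcal{X}\in\mathbb{T}^{\alpha,\mu}$ uniquely as $\mathcal{X}=DU$ with $D\in\mathbb{D}$ having $(r,r)$-block equal to the block-diagonal matrix formed by $\alpha_r$ copies of $A_0^{rr}$, and $U:=D^{-1}\mathcal{X}\in\mathbb{U}$. The subgroup $\mathbb{D}\cong\prod_{r=1}^N\GL_{m_r}(\mathbb{C})$ is manifestly a group. For $\mathbb{U}$, closure under multiplication follows from the preceding step together with the fact that the product of two upper unitriangular Toeplitz matrices is again of the same form; inverses are given by the Neumann series $(I+V)^{-1}=\sum_{k\geq 0}(-V)^k$, which terminates because $V=U-I$ preserves a flag adapted to the Jordan/Segre structure underlying $\mathbb{T}^{\alpha,\mu}$, and each power $V^k$ remains in the shifted algebra by the closure step. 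Normality of $\mathbb{U}$ follows because $DUD^{-1}$ has diagonal blocks of the form $A_0^{rr}(I+\text{strictly upper triangular Toeplitz})(A_0^{rr})^{-1}$, which remains upper unitriangular Toeplitz since scalar conjugation commutes with the Toeplitz shift. Combined with $\mathbb{D}\cap\mathbb{U}=\{I\}$, this yields the semidirect decomposition, and $(DU)^{-1}=U^{-1}D^{-1}\in\mathbb{U}\mathbb{D}\subset\mathbb{T}^{\alpha,\mu}$ then furnishes closure under inversion.

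The main obstacle is the multiplication step, whose case analysis on the three padding patterns for $\mathcal{X}_{rs}$, $\mathcal{Y}_{st}$, and the resulting $(\mathcal{X}\mathcal{Y})_{rt}$ is straightforward in each case but tedious to bookkeep in full generality. A more conceptual shortcut, which I would prefer in practice, is to identify $\mathbb{T}^{\alpha,\mu}$ with the centralizer in $\GL$ of the nilpotent matrix $\bigoplus_r J_{\alpha_r}(0)\otimes I_{m_r}$: centralizers of elements in a group are automatically subgroups, the padded Toeplitz structure emerges as the solution space of the commutation equation $XN=NX$, and unipotence of $\mathbb{U}$ becomes transparent from the flag $\ker N\subset\ker N^2\subset\cdots$, bypassing the explicit block multiplication altogether.
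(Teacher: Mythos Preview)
The paper does not actually prove this proposition; it is quoted verbatim from \cite[Lemma~2.2]{TSOS} and invoked as a known fact. So there is no in-paper argument to compare against.

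Your proposal is sound. The case analysis on the padded Toeplitz blocks works, and your bootstrap (multiplicative closure first, then factor $\mathcal{X}=DU$, then get inverses from the separate group structures of $\mathbb{D}$ and $\mathbb{U}$) is the standard route. One small point worth tightening: when you argue normality of $\mathbb{U}$ by checking $DUD^{-1}$, you should say explicitly that closure under multiplication already puts $DUD^{-1}$ back in $\mathbb{T}^{\alpha,\mu}$, so the only thing left is that its diagonal blocks are unitriangular; your computation of the diagonal block as $A_0^{rr}(\text{unitriangular Toeplitz})(A_0^{rr})^{-1}$ settles that. Your ``conceptual shortcut'' is in fact the cleanest proof: $\mathbb{T}^{\alpha,\mu}$ is, after the block permutation $\Omega$ of Lemma~\ref{lemaP}, exactly the centralizer of $\bigoplus_r\bigl(\bigoplus_{j=1}^{m_r}J_{\alpha_r}(0)\bigr)$ in $\GL$ (this is precisely how the paper uses it later, via Proposition~\ref{resAoXXA}), so the group property is immediate and the flag $\ker N\subset\ker N^2\subset\cdots$ makes unipotence of $\mathbb{U}$ transparent. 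Either argument would be acceptable here.
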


Moreover, certain additional conditions on $\mathcal{X}\in \mathbb{T}^{\alpha,\mu}$ in (\ref{0T0}) are needed:
\begin{enumerate}[label={\bf (\Roman*)},ref={\Roman*},wide=0pt,itemsep=4pt]
\item \label{stabs0} 
The nonzero entries of $\mathcal{X}_{rs}$ with $r>s$ can be taken freely. 
Next, denoting $A_{0}^{rr}:=(\mathcal{X}_{rr})_{11}$, 
then $B_{r}=(A_{0}^{rr})^{T}B_{r}A_{0}^{rr}$ for some given nonsingular symmetric or skew-symmetric matrix $B_r$.

\item \label{stabs4} 
For 
$j\geq 1$ and 
any $r$
we have 
$(\mathcal{X}_{rr})_{1(1+j)}=:A_{j}^{rr}=A_{0}^{rr} B_{r}^{-1}Z_{j}^{r}+D_{j}^{r}$, in which 
$A_{0}^{rr},B_{r}$ are as in (\ref{stabs0}),
$Z_{j}^{r}=(-1)^{j+1}(Z_j^{r})^{T}$
is chosen arbitrarily, $D_j^{r}$ depends polynomially on $A_{j'}^{r'r'}$ with $j'\in \{0,\ldots,j-1\}$, $r'\in \{1,\ldots,r\}$ and on the entries of $\mathcal{X}_{rs}$ for $r>s$. 

\item \label{stabs2} 
The nonzero entries of $\mathcal{X}_{rs}$ for $r,s\in \{1,\ldots,N\}$ with $r<s$ are uniquely determined (polynomially) by the entries of $\mathcal{X}_{rs}$ with $r\geq s$ (described above).
\end{enumerate}
%
Given a symmetric or a skew-symmetric matrix $B$, a matrix $Q$ is called $B$-\emph{pseudo-orthogonal} if $B=Q^{T}BQ$; for instance $A_0^{rr}$ in (\ref{stabs0}) is $B_r$-pseudo-orthogonal.

The following significant examples satisfy the above conditions. By $I_m$ we denote the $m$-by-$m$ identity matrix.

\begin{example} (cf. \cite[Example 3.1]{TSOS})
For $\alpha_r$ odd (even) fix $B_{r}$
nonsingular symmetric (skew-symmetric), and for $j$ odd (even) let 
$Z_{j}^{r}$ be any symmetric (skew-symmetric) matrix, all of size $m_r\times m_r$.
Set
\small
\begin{align}\label{asZ}
&\mathcal{W}=\bigoplus_{r=1}^{N}T(I_{m_r},W_1^{r},\ldots,W_{\alpha_r-1}^{r}),\qquad
W_{j}^{r}:=B_{r}^{-1}\big(Z_{j}^{r}-\frac{1}{2}\sum_{k=1}^{j-1}(-1)^k(W_k^{r})^{T}B_{r}W_{j-k}^{r}
\big).
\end{align}
\end{example}
\normalsize

\begin{example}
(cf. \cite[Example 3.2]{TSOS}) For $1\leq p< t\leq N$ let $\alpha_t<\alpha_p$ and assume $B_{r}\in \mathbb{C}^{m_r\times m_r}$ with $r\in \{p,t\}$ is nonsingular symmetric (skew-symmetric) for $\alpha_r$ odd (even), while $F \in \mathbb{C}^{m_p\times m_t}$ is an arbitrary matrix. We define a special matrix of the form (\ref{0T0}) having the identity as a principal subma\-trix formed by all blocks except those at the $p$-th and the $t$-th columns and rows:
\small
\begin{align}
\label{Hptk}
&\qquad \quad \mathcal{T}_{rs}=\left\{
\begin{array}{ll}
\hspace{-1mm}\bigoplus_{j=1}^{\alpha_r}I_{m_r}, &  r=s\\
\hspace{-1mm}0,                        &  r\neq s 
\end{array}
\right. \hspace{-1mm}, \quad \{r,s\}\not\subset\{p,t\},\qquad\,
\mathcal{T}_{tp}=
N_{\alpha_t}^{k}(F ),\quad 0\leq k \leq \alpha_t-1,\\ 
&\quad\qquad 
\mathcal{T}_{rr}=T(I_{m_r},V_1^{r},\ldots,V_{\alpha_r-1}^{r}), \quad r\in \{p,t\},\qquad\quad\quad
\mathcal{T}_{pt}=
N_{\alpha_t}^{k}\big(-B_{p}^{-1}F^{T}B_{p}\big),
\label{Hptk2}\\
&
V_{j}^{p}:=\left\{\begin{array}{ll}
\hspace{-1mm}a_{n-1}B_{p}^{-1}(F^{T}B_{t}FB_{p}^{-1})^{n}B_p^{}, & j=n(2k+\alpha_p-\beta)\\
\hspace{-1mm}0,                      & \textrm{otherwise}
\end{array}
\right.\hspace{-1mm},\qquad a_{n}:=-\frac{1}{2^{2n+1}(n+1)}
\binom{2n}{n},
\nonumber \\
&
V_{j}^{t}:=\left\{\begin{array}{ll}
\hspace{-1mm} a_{n-1}B_{t}^{-1}(B_{t}FB_{p}^{-1}F^{T})^{n}B_{t}, & j=n(2k+\alpha_t-\beta)\\
\hspace{-1mm} 0,                      & \textrm{otherwise}
\end{array}
\right.\hspace{-1mm};\nonumber
\end{align}
\normalsize
here $N_{\alpha}^{k}(X)$ denotes an $\alpha\times \alpha$ matrix with $X$ on the $k$-th super-diagonal (main diagonal for $k=0$) and zeros otherwise.
If $N=2$, $\alpha_1=4$, $\alpha_2=2$, $k=1$,
$B_{1}=B_{2}=I$, we get
\small
\begin{equation}\label{exXC}
\begin{bmatrix}[cccc|cc]
I_{} & 0 & 0 & -\tfrac{1}{2}F^{T}F  &  0  &  -F^{T} \\
0   & I_{} & 0   & 0  &  0  &   0\\
0   & 0   & I_{} & 0    &  0 & 0 \\
0   & 0   &  0  &  I_{}   & 0 &0 \\
\hline
0   & 0   & 0 & F                         &  I  & 0  \\
0   & 0   & 0   & 0              &  0    &   I  \\
\end{bmatrix}.
\end{equation}
\normalsize
\end{example}
\normalsize

We state our main result; we prove it in Sec.\ref{sec2}. In comparison to \cite[Theorem 2.3]{TSOC} and \cite[Theorem 3.2]{TSOS}, the first part of the theorem exihibits a new phenomena in the structure of the isotropy groups, while in the second part more subtle differences occur. Hence significant modifications in the proof are required.

\begin{theorem}\label{stabw}
For 
$\alpha=(\alpha_1,\ldots,\alpha_N)$ with $\alpha_{1}>\ldots >\alpha_{N}$ and 
$\mu=(m_1,\ldots,m_N)$, 
$\widetilde{\mu}=(\widetilde{m}_1,\ldots,\widetilde{m}_N)$, 
$\widetilde{m}_r:=
\left\{
\begin{array}{ll}
\hspace{-2mm}m_r, &  \hspace{-1mm}\alpha_r \textrm{ odd}\\
\hspace{-2mm}2m_r, & \hspace{-1mm} \alpha_r \textrm{ even}
\end{array}
\right.$, let $\mathbb{T}^{\alpha,\mu}$ and $\mathbb{T}^{\alpha,\widetilde{\mu}}$ be as defined by (\ref{0T0}), while $K_{\alpha_r}(\lambda)$ for $\lambda\in \mathbb{C}$ and $L_{\alpha_r}$ for $\alpha_r$ odd are defined in (\ref{Km}) and (\ref{Lm}), respectively.
\begin{enumerate}
\item \label{stabw1}
Set $\mathcal{K}^{}_{\lambda}:=\bigoplus_{r=1}^{N}\big(\bigoplus_{j=1}^{m_r}K_{\alpha_r}(\lambda)\big)$ for $\lambda\in \mathbb{C}\setminus\{0\}$ and $\mathcal{O}_{\lambda}:=e^{\mathcal{K}_{\lambda}}$ for $\lambda\in \mathbb{C}\setminus\{k\pi\}_{k\in \mathbb{Z}}$.
Then their isotropy groups 
$\Sigma_{\mathcal{K}^{}_{\lambda}}$, $\Sigma_{\mathcal{O}^{}_{\lambda}}$ coincide and are conjugate to
\[
\mathbb{X}:=\{\mathcal{X}\oplus (\mathcal{X}^{-1})^{T}\mid \mathcal{X}\in \mathbb{T}^{\alpha,\mu}\}\subset \mathbb{T}^{\alpha,\mu}\oplus \mathbb{T}^{\alpha,\mu}.
\]
In particular, $\Sigma_{\mathcal{K}^{}_{\lambda}}$ and $\Sigma_{\mathcal{O}^{}_{\lambda}}$ are isomorphic to $\mathbb{T}^{\alpha,\mu}$ and their dimension is 
\[
\sum_{r=1}^{N} m_r\big(\alpha_r m_r+2\sum_{s=1}^{r-1}\alpha_s m_s\big).
\] 
\item \label{stabw2}
Assume
$\mathcal{K}^{}_0=
\bigoplus_{r=1}^{N}\big(\bigoplus_{j=1}^{m_r}\widetilde{K}_{r}\big)$ with $\widetilde{K}_{r}=
\left\{
\begin{array}{ll}
\hspace{-1mm}K_{\alpha_r}(0),   &  \hspace{-1mm} 
\alpha_r \textrm{ even}\\
\hspace{-1mm}L_{\alpha_r},      &   \hspace{-1mm} 
\alpha_r \textrm{ odd}
\end{array}
\right.$ 
and  
$\mathcal{O}_0=\varepsilon e^{\mathcal{K}_0}$, $\varepsilon\in \{-1,1\}$. Then the isotropy groups 
$\Sigma_{\mathcal{K}^{}_0}$, $\Sigma_{\mathcal{O}^{}_0}$ coincide and they are conjugate to a subgroup
$\mathbb{X} \subset \mathbb{T}^{\alpha,\widetilde{\mu}}$. Furthermore, each $\mathcal{X} \in \mathbb{X}$ satisfies the conditions (\ref{stabs0}), (\ref{stabs4}), (\ref{stabs2}) for 
$B_r := \left\{
\begin{array}{ll}
\hspace{-1mm} \bigoplus_{j=1}^{m_r} 
\begin{bsmallmatrix}
0 & -1\\
1  & 0
\end{bsmallmatrix}, & \alpha_r \textrm{ even}\\
\hspace{-1mm}(-1)^{(\alpha_r-1)/2} I_{m_r}, & \alpha_r \textrm{ odd}
\end{array}
\right.$ and 
\begin{align*}
\dim_{\mathbb{C}} (\Sigma_{\mathcal{K}_{0}^{}})
=\dim_{\mathbb{C}} (\Sigma_{\mathcal{O}_{0}^{}})
=
\sum_{r=1}^{N}\big(\tfrac{1}{2}\widetilde{m}_r^{2}\alpha_r+\sum_{s=1}^{r-1}\alpha_s \widetilde{m}_r \widetilde{m}_s\big) -
\sum_{\alpha_r \textrm{ odd}}\tfrac{1}{2} m_r,
\end{align*}
\normalsize
Moreover, $\mathbb{X}=\mathbb{O}\ltimes \mathbb{V}$ is a semidirect product, in which $\mathbb{O}$ consists of all 
$\mathcal{Q}=\bigoplus_{r=1}^{N} \big(\bigoplus_{j=1}^{\alpha_r} Q_r\big)$, where $Q_r$ is $B_r$-pseudo-orthogonal, while 
$\mathbb{V}$ is generated by matrices of the form (\ref{asZ}) and (\ref{Hptk}) with (\ref{Hptk2}) for $B_{r}$ defined above.
\end{enumerate}
\end{theorem}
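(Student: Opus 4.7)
The plan is to reduce the fixed-point equation $Q^{T}MQ=M$ (equivalent to $MQ=QM$, since $Q^{-1}=Q^{T}$) to a centralizer problem via a transition matrix $S$ with $SMS^{-1}=J$ in Jordan canonical form. Setting $X=SQS^{-1}$, the commutation equation becomes $JX=XJ$, so $X$ lies in the centralizer of $J$, which is a block Toeplitz algebra. The orthogonality condition $Q^{T}Q=I$ translates into $X^{T}CX=C$ with $C:=S^{-T}S^{-1}$; the crux is to pick $S$ so that $C$ takes an explicit and tractable form, and then to unwind $X^{T}CX=C$ within the centralizer.

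For part (\ref{stabw1}), since $\lambda\neq-\lambda$, commuting with $J=\widetilde{J}\oplus\widehat{J}$ (eigenvalues $\lambda$ and $-\lambda$) forces $X=\mathcal{X}\oplus\mathcal{Y}$ with $\mathcal{X},\mathcal{Y}\in\mathbb{T}^{\alpha,\mu}$. The skew-symmetry of $\mathcal{K}_{\lambda}$ is reflected in an anti-diagonal pairing structure of $C$ coupling the $\lambda$- and $(-\lambda)$-blocks, and I expect the constraint $X^{T}CX=C$ to reduce to the single relation $\mathcal{Y}=(\mathcal{X}^{-1})^{T}$. This yields $\mathbb{X}=\{\mathcal{X}\oplus(\mathcal{X}^{-1})^{T}\}\cong\mathbb{T}^{\alpha,\mu}$, and the stated dimension follows by counting free entries of a block rectangular upper-triangular Toeplitz matrix as in Proposition \ref{lemanilpo}. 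The \emph{new phenomenon} flagged in the introduction is exactly this diagonal-in-a-product description, which does not appear in the symmetric case of \cite{TSOS}.

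For part (\ref{stabw2}), all blocks share the eigenvalue zero, so the centralizer of the Jordan form is the larger algebra $\mathbb{T}^{\alpha,\widetilde{\mu}}$: here $\widetilde{m}_r=2m_r$ for $\alpha_r$ even accounts for the paired blocks $J_{2\beta_k}(0)\oplus J_{2\beta_k}(0)$, while $\widetilde{m}_r=m_r$ for $\alpha_r$ odd. The orthogonality constraint becomes a single equation $\mathcal{X}^{T}B\mathcal{X}=B$ with $B=\bigoplus_r B_r$ as stated; the parity split arises because an odd-sized Jordan block produces the symmetric pairing $(-1)^{(\alpha_r-1)/2}I_{m_r}$ while an even-sized paired block produces the standard skew-symmetric pairing. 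Unwinding $\mathcal{X}^{T}B\mathcal{X}=B$ block-by-block yields precisely conditions (\ref{stabs0})--(\ref{stabs2}): the $(r,r)$-block gives $B_r$-pseudo-orthogonality of $A_0^{rr}$ and then recursively the formulas in (\ref{stabs4}) for the higher Toeplitz coefficients $A_j^{rr}$ in terms of an arbitrary $\pm$-symmetric $Z_j^r$, while the off-diagonal blocks for $r<s$ are uniquely determined by those with $r\geq s$. The elementary solutions of these recursions are exactly the matrices (\ref{asZ}) and (\ref{Hptk})--(\ref{Hptk2}), which therefore generate $\mathbb{V}$; the semidirect product $\mathbb{X}=\mathbb{O}\ltimes\mathbb{V}$ follows by splitting off the block-diagonal pseudo-orthogonal part via Proposition \ref{lemanilpo}.

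To handle the orthogonal matrices, I would prove $\Sigma_{\mathcal{K}_\lambda}=\Sigma_{\mathcal{O}_\lambda}$ and $\Sigma_{\mathcal{K}_0}=\Sigma_{\mathcal{O}_0}$ by noting that $\mathcal{K}_\lambda$ (respectively $\mathcal{K}_0$) is a polynomial in $\mathcal{O}_\lambda=e^{\mathcal{K}_\lambda}$ (respectively in $\varepsilon\mathcal{O}_0$) through the holomorphic logarithm on the relevant spectrum, so any $Q$ commuting with one commutes with the other; the scalar $\varepsilon\in\{-1,1\}$ is inert under conjugation. The main obstacle I anticipate is in part (\ref{stabw2}): pinning down $C=S^{-T}S^{-1}$ explicitly and then verifying that the intricate closed-form coefficients of (\ref{Hptk})--(\ref{Hptk2}) --- in particular the weights $a_n=-\binom{2n}{n}/(2^{2n+1}(n+1))$ --- genuinely solve the antidiagonal recursion produced by $\mathcal{X}^{T}B\mathcal{X}=B$. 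This is where the departures from \cite{TSOS}, caused by skew-symmetry versus symmetry and by the mixed real/imaginary structure of the $L_{2\gamma-1}$-blocks, make the bookkeeping genuinely more delicate than a direct adaptation.
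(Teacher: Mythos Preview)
Your overall strategy coincides with the paper's: pass to Jordan form via a transition matrix $S$, identify the centralizer, and impose orthogonality as $X^{T}CX=C$ with $C=(S^{-1})^{T}S^{-1}$. Your outline of part~(\ref{stabw1}) is correct; the paper additionally inserts explicit permutations ($\widetilde{\Omega}$ to separate the $\pm\lambda$ blocks, then $\Omega$ from Lemma~\ref{lemaP} to pass from blocks-of-Toeplitz to Toeplitz-of-blocks), and the intermediate constraint is $B_1=X_1^{T}B_1X_2$ with an explicit anti-diagonal $B_1$, which is then absorbed into the conjugating matrix $\Psi$ to produce $\mathcal{X}_1\oplus(\mathcal{X}_1^{-1})^{T}$. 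Your logarithm argument for $\Sigma_{\mathcal{K}_\lambda}=\Sigma_{\mathcal{O}_\lambda}$ is valid and slicker than the paper's direct dimension count (Lemma~\ref{resAoXXAo}).

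In part~(\ref{stabw2}) your proposal contains a genuine inaccuracy: the orthogonality constraint does \emph{not} become $\mathcal{X}^{T}B\mathcal{X}=B$ with $B=\bigoplus_r B_r$ block-diagonal. The paper's computation of $C=(S^{-1})^{T}S^{-1}$ (equations~(\ref{sodS})--(\ref{lihS})) shows $C=iEB$ with $E$ a direct sum of backward identities; after the permutation $\Omega$ of Lemma~\ref{lemaP} one arrives at
\[
\mathcal{B}=\mathcal{F}\,\mathcal{X}^{T}\,\mathcal{F}\,\mathcal{B}\,\mathcal{X},\qquad
\mathcal{F}=\bigoplus_r E_{\alpha_r}(I_{\widetilde{m}_r}),\quad
\mathcal{B}=\bigoplus_r\bigoplus_{j=1}^{\alpha_r}(-1)^{j-1}B_r.
\]
The block backward-identities $\mathcal{F}$ are not cosmetic: the conjugation $\mathcal{X}^{T}\mapsto\mathcal{F}\mathcal{X}^{T}\mathcal{F}$ is exactly what brings the transpose back into the \emph{upper}-triangular Toeplitz class $\mathbb{T}^{\alpha,\widetilde{\mu}}$, so that the product can be compared with $\mathcal{B}$ row by row as in Lemma~\ref{EqT}. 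Without it, $\mathcal{X}^{T}$ sits in the lower-triangular counterpart of $\mathbb{T}^{\alpha,\widetilde{\mu}}$, and your equation $\mathcal{X}^{T}B\mathcal{X}=B$ with block-diagonal $B$ is a different constraint whose solution set is not the one Lemma~\ref{EqT} describes. Once the correct equation is in hand, conditions~(\ref{stabs0})--(\ref{stabs2}), the dimension formula, and the semidirect decomposition $\mathbb{X}=\mathbb{O}\ltimes\mathbb{V}$ with generators~(\ref{asZ}),~(\ref{Hptk})--(\ref{Hptk2}) are read off from Lemmas~\ref{EqT} and~\ref{lemauni}; the latter's proof is deferred to \cite{TSOS}, so you do not need to re-verify the $a_n$ weights here.
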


\begin{remark}\label{OpI}
\begin{enumerate}[label={(\roman*)},ref={\roman*},wide=0pt,itemsep=2pt]
\item The computation of $\mathbb{X}$ in Theorem \ref{stabw} (\ref{stabw2}) 
is pro\-vi\-ded as part of its proof (Lemma \ref{EqT}), as well as a matrix $\Psi$ for which $\Sigma_{\mathcal{K}_{\lambda}}=\Sigma_{\mathcal{O}_{\lambda}}=\Psi^{-1}\mathbb{X} \Psi $.
\item For any skew-symmetric or orthogonal matrix $M$, Theorem \ref{stabw} provides the dimension of the orbit $\{Q^T M Q\mid Q \textrm{ orthogonal}\}$ at $M$; 
it is equal to the codimension of $\Sigma_M$ 
in the group of orthogonal matrices.
\end{enumerate}
\end{remark}

Real skew-symmetric and real orthogonal matrices are normal, thus they are diagonalizable. Real skew-symmetric canonical form is a direct sum of blocks of the form $\begin{bsmallmatrix}
0 & \sigma\\
-\sigma & 0
\end{bsmallmatrix}$ with $\sigma\in\mathbb{R}^{}\setminus\{0\}$ 
and the zero matrix, while orthogonal canonical form consists of blocks 
$\begin{bsmallmatrix}
\cos (\varphi) & \sin (\varphi) \\
-\sin (\varphi) & \cos (\varphi) 
\end{bsmallmatrix}$ with $\varphi \in\mathbb{R}^{}\setminus \{k\pi\}_{k\in \mathbb{Z}}$ and the plus or minus identity matrix. Moreover, these can be obtained even by real orthogonal similarity transformations (see \cite{Thomp}). Their isotropy groups are provided by 
applying the proof of Theorem \ref{stabw} (\ref{stabw1}) for a simple special case $N=\alpha_1=1$.

\begin{corollary}\label{stabr}
\begin{enumerate}
\item 
Assume $
\mathcal{K}=\bigoplus_{j=1}^{N}\Big(\bigoplus_{k=1}^{m_j} \begin{bsmallmatrix}
0 & \sigma_j\\
-\sigma_j & 0
\end{bsmallmatrix}\Big) \oplus 0_{M}
$, in which $ \sigma_1,\ldots,\sigma_N\in \mathbb{R}^{}\setminus\{0\}$ are pairwise distinct, and set $n:=M+\sum_{j=1}^{N}2m_j$.
It then follows that $\Sigma_{\mathcal{K}^{}}\cap \O_n(\mathbb{R})$ is real conjugate (by a permutation matrix) to 
\begin{align*}
\bigoplus_{j=1}^{N}
\big\{  \begin{bsmallmatrix}
\Rea (Q) & \Ima (Q)\\
-\Ima (Q) & \Rea (Q)
\end{bsmallmatrix} \,\, \big| \,\,Q\in \U(m_j)\big\}
\oplus
\O_{M}(\mathbb{R}) \subset \mathbb{R}^{n\times n
}.
\end{align*}
%
In particular, $\Sigma_{\mathcal{K}^{}}\cap \O_n(\mathbb{R})$ is isomorphic to $\bigoplus_{j=1}^{N}
 \U(m_j)
\oplus
\O_{M}(\mathbb{R})$.

\item 
Suppose 
$\mathcal{O}=\bigoplus_{j=1}^{N}\big(\bigoplus_{k=1}^{m_j} \begin{bsmallmatrix}
\cos (\varphi_j) & \sin (\varphi_j) \\
-\sin (\varphi_j) & \cos (\varphi_j) 
\end{bsmallmatrix}\big) \oplus I_{M_{1}} \oplus -I_{M_{2}}$, in which $\varphi_1,\ldots,\varphi_N\in \mathbb{R}\setminus \{k\pi\}_{k\in \mathbb{Z}}$ are paiwise distinct, and set $n:=M_{1}+M_{2}+\hspace{-1mm}\sum_{j=1}^{N}2m_j$. 
Then $\Sigma_{\mathcal{O}^{}}\cap \O_n(\mathbb{R})$ is real conjugate (by a permutation matrix) to
\begin{align*}
\bigoplus_{j=1}^{N}
\big\{  \begin{bsmallmatrix}
\Rea (Q) & \Ima (Q)\\
-\Ima (Q) & \Rea (Q)
\end{bsmallmatrix} \,\, \big| \,\,Q\in \U(m_j)\big\}
\oplus
\O_{M_{1}}(\mathbb{R})\oplus
\O_{M_{2}}(\mathbb{R}) \subset \mathbb{R}^{n\times n
}.
\end{align*}
In particular, $\Sigma_{\mathcal{O}^{}}$ is isomorphic to $\bigoplus_{j=1}^{N}
 \U(m_j)
\oplus
\O_{M_{1}}(\mathbb{R})\oplus \O_{M_{2}}(\mathbb{R})$.
\end{enumerate}
\end{corollary}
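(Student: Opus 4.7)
The plan is to apply Proposition \ref{stabs11} to peel off the contribution of each distinct eigenvalue, then Theorem \ref{stabw}(\ref{stabw1}) in its simplest degenerate case $N = \alpha_1 = 1$ to each block, and finally impose reality.

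For (1), each real block $\begin{bsmallmatrix} 0 & \sigma_j \\ -\sigma_j & 0 \end{bsmallmatrix}$ equals $K_1(-i\sigma_j)$, so $\mathcal{K}$ is already in the complex canonical form (\ref{NF1s}) with eigenvalues $\pm i\sigma_1,\ldots,\pm i\sigma_N,0$ pairwise distinct. Proposition \ref{stabs11}(\ref{stabs11a}) then yields the direct-sum decomposition $\Sigma_\mathcal{K} = \bigoplus_{j=1}^N \Sigma_{\mathcal{K}_j} \oplus \Sigma_{0_M}$, which evidently persists under intersection with $\O_n(\mathbb{R})$; the kernel part contributes $\Sigma_{0_M} \cap \O_n(\mathbb{R}) = \O_M(\mathbb{R})$ directly. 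For each $j$, Theorem \ref{stabw}(\ref{stabw1}) applied with $N = \alpha_1 = 1$ and $m_1 = m_j$ identifies $\Sigma_{\mathcal{K}_j} = \Psi_j^{-1}\{X \oplus (X^{-1})^T \mid X \in \GL_{m_j}(\mathbb{C})\}\Psi_j$, where $\Psi_j$ (as read off from the proof of that theorem) is, up to a grouping permutation, the Kronecker product $S \otimes I_{m_j}$ with $S = \begin{bsmallmatrix} 1 & 1 \\ -i & i \end{bsmallmatrix}$ diagonalizing $K_1(-i\sigma_j)$.

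The key reality observation is that the columns of $S$ are complex conjugates, so $\overline{\Psi_j}^{-1}\Psi_j = \begin{bsmallmatrix} 0 & I_{m_j} \\ I_{m_j} & 0 \end{bsmallmatrix}$. Hence a parametrized element $Q_j = \Psi_j(X \oplus (X^{-1})^T)\Psi_j^{-1}$ is real precisely when $(X^{-1})^T = \overline{X}$, i.e., when $X \in \U(m_j)$. Writing $X = A + iB$ with real $A,B$, a direct evaluation of $\Psi_j((A+iB) \oplus (A-iB))\Psi_j^{-1}$ produces a real matrix of shape $\begin{bsmallmatrix} A & -B \\ B & A \end{bsmallmatrix}$ in the grouped basis, which a permutation rearranging coordinates (together with a sign convention on one eigenvector) turns into the form $\begin{bsmallmatrix} \Rea(Q) & \Ima(Q) \\ -\Ima(Q) & \Rea(Q) \end{bsmallmatrix}$ with $Q = A + iB \in \U(m_j)$ stated in the corollary. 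Multiplicativity of $X \mapsto Q_j$ establishes the isomorphism onto $\U(m_j)$; summing over $j$ and attaching $\O_M(\mathbb{R})$ finishes (1).

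Part (2) runs in complete parallel: $R(\varphi_j) := \begin{bsmallmatrix} \cos\varphi_j & \sin\varphi_j \\ -\sin\varphi_j & \cos\varphi_j \end{bsmallmatrix} = e^{K_1(-i\varphi_j)}$ puts $\mathcal{O}$ in the canonical form (\ref{NF2o}), Proposition \ref{stabs11}(\ref{stabs11ao}) gives the eigenvalue decomposition, and the identity $\Sigma_{\mathcal{K}_\lambda} = \Sigma_{\mathcal{O}_\lambda}$ from Theorem \ref{stabw}(\ref{stabw1}) reduces each rotation block to the computation above. The $\pm 1$ eigenvalue blocks, being $\pm I$, have complex isotropy $\O_{M_r}(\mathbb{C})$ and so contribute $\O_{M_1}(\mathbb{R}) \oplus \O_{M_2}(\mathbb{R})$ after restricting to reals. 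The only delicate step in either case is the second-paragraph bookkeeping---tracking precisely how $X \oplus \overline{X}$ transforms under $\Psi_j$ into the stated real block shape---while every other ingredient reduces to a result already established in the paper.
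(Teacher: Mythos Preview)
Your approach is correct and is essentially the paper's own: split by eigenvalue via Proposition~\ref{stabs11}, apply Theorem~\ref{stabw}(\ref{stabw1}) in the degenerate case $N=\alpha_1=1$ to obtain conjugacy to $\{X\oplus(X^{-1})^{T}\mid X\in\GL_{m_j}(\mathbb{C})\}$, and then impose reality to force $X\in\U(m_j)$; the paper carries out the last step by an explicit block computation of $\Omega_{2,m}^{T}Q\,\Omega_{2,m}$, whereas you use the equivalent observation $\overline{\Psi_j}^{-1}\Psi_j=\begin{bsmallmatrix}0&I\\I&0\end{bsmallmatrix}$.

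One small correction: the diagonalizing matrix $S=\begin{bsmallmatrix}1&1\\-i&i\end{bsmallmatrix}$ you say is ``read off from the proof of that theorem'' is not the transition matrix actually used there---the paper takes $S_2=\tfrac{1}{\sqrt{2}}\begin{bsmallmatrix}1&i\\-i&-1\end{bsmallmatrix}$, whose columns are \emph{not} complex conjugates of one another. Your $S$ is a perfectly valid alternative choice (any matrix diagonalizing $K_1(-i\sigma_j)$ works), and it does make the reality argument cleaner, but you should present it as your own choice of eigenbasis rather than as something lifted from the paper's proof. Also note the conjugation direction flips between your two paragraphs ($\Psi_j^{-1}(\cdot)\Psi_j$ versus $\Psi_j(\cdot)\Psi_j^{-1}$); fix one convention.
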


Since $\SO_2(\mathbb{R})$ is isomorphic to $U(1)$, Corollary \ref{stabr} 
is consistent with Proposition \ref{stabs11}. For more on the real (imaginary) part of a unitary matrix we refer to \cite{Oli}.

\section{Certain block matrix equation}\label{cereq}

For 
$\alpha=(\alpha_1,\ldots,\alpha_N)$ with $\alpha_{1}>\ldots >\alpha_{N}$ and 
$\mu=(m_1,\ldots,m_N)$, 
let 
\small
\begin{align}\label{BBF}
&\mathcal{B}=\bigoplus_{r=1}^{N}T_a\big(B_0^{r},B_1^{r},\ldots,B_{\alpha_r-1}^{r}\big),\quad \mathcal{C}=\bigoplus_{r=1}^{N}T_a\big(C_0^{r},C_1^{r},\ldots,C_{\alpha_r-1}^{r}\big),\quad \mathcal{F}=\bigoplus_{r=1}^{N}E_{\alpha_r}(I_{m_r}),
\end{align}
\normalsize
in which $B_{j}^{r},C_{j}^{r}\in  \mathbb{C}^{m_r \times m_r}$ are symmetric for $\alpha_r-j$ odd and 
skew-symmetric for $\alpha_r-j$ even, $B_{0}^{r},C_{0}^{r}\in \GL_{m_r}(\mathbb{C})$, and 
$E_{\alpha}(I_{m}):=
\begin{bsmallmatrix}
 0                 &      & I_{m}\\
            &   \iddots     &  \\
I_{m}            &           &  0\\
\end{bsmallmatrix}
$
is an $\alpha\textrm{-by-} \alpha$ block matrix with $I_m$ on the anti-diagonal and zero matrices otherwise; by
%
\setlength{\arraycolsep}{4.4pt}
\small
\begin{align*}
T_a(A_0,\ldots,A_{\alpha-1}):=\begin{bmatrix}
  A_{0} & A_{1}         &  \ldots             & \ldots &    A_{\alpha-1}  \\
0       & -A_0 & -A_{1}   &  \ldots    & -A_{\alpha-2} \\
 \vdots & \ddots            & A_0              & \ddots &   \vdots \\ 
 \vdots &  & \ddots   & \ddots            &  (-1)^{\alpha-2}A_1\\
0       & \ldots            &  \ldots &  0      & (-1)^{\alpha-1}A_0
\end{bmatrix}
\end{align*}
\normalsize
we denote an $\alpha$-by-$\alpha$ block \emph{alternating upper triangular Toeplitz} matrix 
such that $T_a(A_0,\ldots,A_{\alpha-1})=[T_{jk}]_{j,k=1}^{\alpha}$, $T_{jk}=0$ for $j>k$, $T_{(j+1)(k+1)}=-T_{jk}$.
To prove Theorem \ref{stabw} (\ref{stabw2}) it is essential 
to find all $\mathcal{X}\in \mathbb{T}^{\alpha,\mu}$ 
(see (\ref{0T0})) that solve the equation
\begin{equation}\label{eqFYFIY}
\mathcal{C}=\mathcal{F}\mathcal{X}^{T}\mathcal{F}\mathcal{B} \mathcal{X}.
\end{equation}

The following result provides the solutions of (\ref{eqFYFIY}). 
It is an adaptation of 
\cite[Lemma 4.1]{TSOS}, \cite[Lemma 4.1]{TSOC} to our situation; note that certain subtle modifications are made.

\begin{lemma}\label{EqT}
Given 
$\alpha=(\alpha_1,\ldots,\alpha_N)$ with $\alpha_{1}>\ldots >\alpha_{N}$ and 
$\mu=(m_1,\ldots,m_N)$, 
let 
$\mathcal{B},\mathcal{C}$ be as in (\ref{BBF}). Then 
$\mathcal{X}=[\mathcal{X}_{rs}]_{r,s=1}^{N}\in \mathbb{T}^{\alpha, \mu}$ (defined by (\ref{0T0})), i.e. 
\begin{equation*}\label{EqTX}
\mathcal{X}_{rs}=
\left\{
\begin{array}{ll}
\hspace{-1mm}[0\quad \mathcal{T}_{rs}], & \alpha_r<\alpha_s\\
\hspace{-1mm}\begin{bmatrix}
\mathcal{T}_{rs}\\
0
\end{bmatrix}, & \alpha_r>\alpha_s\\
\hspace{-1mm}\mathcal{T}_{rs},& \alpha_r=\alpha_s
\end{array}\right., \qquad 
\begin{array}{l}
\\
\mathcal{T}_{rs}=T\big(A_0^{rs},\ldots,A_{b_{rs}-1}^{rs}\big),\quad b_{rs}:=\min\{\alpha_s,\alpha_r\},\\
\\
A_j^{rs}\in \mathbb{C}^{m_r\times m_s},\quad A_0^{rr}\in GL_{m_r}(\mathbb{C}),
\end{array}
\end{equation*}
%
is a solution of the equation (\ref{eqFYFIY}) if and only if it
satisfies the following properties:
\begin{enumerate}[label={(\alph*)},ref={\alph*},
leftmargin=13pt,itemindent=3pt]
%
\item \label{EqT2} 
Each $A_0^{rr}$ is a solution of the equation $C_0^{r}=(A_0^{rr})^{T}B_0^{r}A_0^{rr}$. If $N\geq 2$ matrices $A_j^{rs}$ for $j\in \{0,\ldots,\alpha_{r}-1\}$, $r,s\in \{1,\ldots,N\}$ with $r>s$ can be taken arbitrarily. 

\item \label{EqT3ca} 
Assuming (\ref{EqT2}) and choosing freely the $m_r$-by-$m_r$ matrices $Z_j^{r}$ with $(Z_j^{r})^{T}=(-1)^{j+1}Z_j^{r}$ for $j\in \{1,\ldots,\alpha_r-1\}$, the remaining entries of $\mathcal{X}$ are computed as follows:
\end{enumerate}

\vspace{-5mm}
\hspace{-12mm}
\begin{algorithmic}
\vspace{1mm}
\State $\Psi^{krs}_{n}:=\sum_{l=0}^{n}\sum_{i=0}^{n-l}(-1)^{l}(A_l^{kr})^{T}B_{n-l-i}^{k}A_i^{ks}$
\small
\State 
$\xi_n^{rs}:=
\sum_{l=1}^{n-1}\sum_{i=0}^{n-l}(-1)^{l}(A_l^{rr})^{T}B_{n-l-i}^{r}A_i^{rs}
+ \sum_{i=0}^{n-1}(A_0^{rr})^{T}B_{n-i}^{k}A_i^{rs}
+\left\{\begin{array}{ll}
\hspace{-2mm} 0, & s=r\\
\hspace{-2mm} (-1)^{n}(A_n^{rr})^{T}B_{0}^{k}A_0^{rs}, &  s> r
\end{array}
\right.
$
\normalsize
\For {$j=0:\alpha_1-1$}
    \If {$r\in \{1,\ldots,N\}$,  $ j\in \{1,\ldots,\alpha_r-1\}$}
    \State $A_j^{rr}=A_0^{rr}(C_0^{r})^{-1}\Big(Z_j^{r}-\frac{1}{2}\big(\xi_j^{rr}
    +\sum_{k=1}^{r-1}\Psi_{j-\alpha_k+\alpha_r}^{krr}+\sum_{k=r+1}^{N}\Psi_{j-\alpha_{r}+\alpha_k}^{krr}\big)\Big)$
    \EndIf
    \For {$p=1:N-1$}
        \If {$r\in \{1,\ldots,N\}$, $j\leq \alpha_{r+p}-1$, $r+p\leq N$}
        \small
        \State \hspace{-3mm} $A_j^{r(r+p)}=-A_0^{r(r+p)}(C_0^{r})^{-1}\Big(
        \xi_j^{r(r+p)}+\sum_{k=1}^{r-1}\Psi_{j-\alpha_k+\alpha_r}^{kr(r+p)}+ \sum_{k=r+1}^{r+p}\Psi_{j}^{kr(r+p)} $\\
        \qquad \qquad \qquad \qquad \qquad \qquad \qquad \qquad \qquad \qquad \qquad \qquad $+\sum_{k=r+p+1}^{N}\Psi_{j-\alpha_{r+p}+\alpha_k}^{kr(r+p)}\Big)$   
\normalsize
        \EndIf
    \EndFor
\EndFor
\end{algorithmic}
(We define $\sum_{j=l}^{n}a_j=0$ for $l>n$, and the loop p =1 : N-1 is not performed for $N=1$.)

In particular, the 
dimension of the space of solutions $\mathcal{X}\in\mathbb{T}$ of the equation (\ref{eqFYFIY}) is  
\small
\begin{align}\label{cdim}
\sum_{r=1}^{N}\big(\tfrac{1}{2}m_r^{2}\alpha_r+\sum_{s=1}^{r-1}\alpha_s m_r m_s\big) -
\sum_{\alpha_r \textrm{ odd}}\tfrac{1}{2} m_r, 
\end{align}
\normalsize

\end{lemma}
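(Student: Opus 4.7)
The plan is to expand the equation $\mathcal{C} = \mathcal{F}\mathcal{X}^{T}\mathcal{F}\mathcal{B}\mathcal{X}$ outer block by outer block, then coefficient by coefficient along the Toeplitz diagonals. Writing $\mathcal{B} = \bigoplus_{r}\mathcal{B}_{r}$, $\mathcal{C} = \bigoplus_{r}\mathcal{C}_{r}$, $\mathcal{F} = \bigoplus_{r}\mathcal{F}_{r}$ and using the block-diagonal structure, the $(r,s)$ outer block of the equation reads
\[
\delta_{rs}\mathcal{C}_{r} \;=\; \mathcal{F}_{r}\sum_{k=1}^{N}\mathcal{X}_{kr}^{T}\mathcal{F}_{k}\mathcal{B}_{k}\mathcal{X}_{ks}.
\]
Using the rectangular upper-triangular Toeplitz form of each $\mathcal{X}_{kr}$, the alternating upper-triangular Toeplitz form of $\mathcal{B}_{k}$, and the fact that $\mathcal{F}_{k}$ block-reverses rows, each inner $(m_{r}\times m_{s})$-entry of the $(r,s)$-block becomes a convolution in the Toeplitz coefficients $A_{\cdot}^{kr}$, $A_{\cdot}^{ks}$, $B_{\cdot}^{k}$. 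The quantities $\Psi_{n}^{krs}$ and $\xi_{n}^{rs}$ of the statement are exactly these convolution sums, with the signs inherited from $T_{a}$.

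The main step is to solve the resulting system of matrix equations inductively in the Toeplitz level $j$, and within each $j$ over the block positions $(r,s)$ with $r \le s$. At level $j = 0$ on a diagonal block $(r,r)$ only the leading entries contribute and the equation reduces to $C_{0}^{r} = (A_{0}^{rr})^{T}B_{0}^{r}A_{0}^{rr}$, which is exactly (\ref{EqT2}); the entries $A_{0}^{rs}$ for $r > s$ remain unconstrained. For $j \ge 1$ and $r < s$, the level-$j$ equation on $(r,s)$ is affine-linear in the single new unknown $A_{j}^{rs}$ with leading coefficient $(A_{0}^{rr})^{T}B_{0}^{r}A_{0}^{rs}$, all other terms involving entries already fixed at smaller levels or at strictly larger row indices. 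Isolating the new unknown produces exactly the off-diagonal formula in (\ref{EqT3ca}).

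The main obstacle is the diagonal case $r = s$ at levels $j \ge 1$. The transposition in $\mathcal{X}_{rr}^{T}(\cdots)\mathcal{X}_{rr}$ makes the level-$j$ equation take the form
\[
(A_{0}^{rr})^{T}B_{0}^{r}A_{j}^{rr} + (-1)^{j+1}(A_{j}^{rr})^{T}B_{0}^{r}A_{0}^{rr} \;=\; \xi_{j}^{rr} + (\text{contributions from }k\ne r),
\]
and one must verify that the right-hand side always lies in the correct $(-1)^{j+1}$-(skew-)symmetric subspace of $m_{r}\times m_{r}$ matrices. This is forced by the alternating sign pattern of $T_{a}$ combined with the parity rule on $B_{j}^{r}, C_{j}^{r}$, and it is the subtle modification relative to \cite[Lemma 4.1]{TSOS}; once the compatibility is verified by induction on $j$, the equation determines the opposite-parity part of $B_{0}^{r}(A_{0}^{rr})^{-1}A_{j}^{rr}$ and leaves a complementary free part, naturally parametrized by $Z_{j}^{r}$ with $(Z_{j}^{r})^{T} = (-1)^{j+1}Z_{j}^{r}$. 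The coefficient $\tfrac{1}{2}$ in the formula for $A_{j}^{rr}$ simply encodes the splitting into particular and homogeneous components.

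The dimension formula (\ref{cdim}) is then a count of free complex parameters: each $A_{0}^{rr}$ contributes the dimension $\tfrac{1}{2}m_{r}(m_{r}\mp 1)$ of the $B_{0}^{r}$-pseudo-orthogonal group (sign depending on the parity of $\alpha_{r}$), each $Z_{j}^{r}$ for $j \ge 1$ contributes the dimension $\tfrac{1}{2}m_{r}(m_{r}\pm 1)$ of the relevant (skew-)symmetric space, and each freely chosen $A_{j}^{rs}$ with $r > s$ contributes $m_{r}m_{s}$. Summing over $r$, $s$, and $j$ and telescoping yields the stated formula, with the correction $-\sum_{\alpha_{r}\text{ odd}}\tfrac{1}{2}m_{r}$ arising from the systematic half-unit offset between the pseudo-orthogonal and complementary (skew-)symmetric dimensions when $\alpha_{r}$ is odd.
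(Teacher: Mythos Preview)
Your outline follows the paper's proof closely: block-by-block expansion, identification of the convolution sums $\Psi_n^{krs}$, the level-$0$ constraint $C_0^r=(A_0^{rr})^TB_0^rA_0^{rr}$, the inductive solution over levels $j$, and the dimension count. Two points need correction.

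\textbf{The symmetry reduction is missing.} You write down the $(r,s)$-block equation $\delta_{rs}\mathcal{C}_r=\mathcal{F}_r\sum_k\mathcal{X}_{kr}^T\mathcal{F}_k\mathcal{B}_k\mathcal{X}_{ks}$ and then treat only the cases $r\le s$, asserting that the $A_j^{rs}$ with $r>s$ are free. But a priori the blocks with $r>s$ impose their own equations; you never explain why solving the $(r,s)$-equations for $r\le s$ automatically solves the $(s,r)$-equations. The paper handles this first, before any induction: using that $B_j^r$ is symmetric or skew-symmetric according to the parity of $\alpha_r-j$, one checks $E_{\alpha_r}(I)\big(T_a(B_0^r,\dots)\big)^TE_{\alpha_r}(I)=T_a(B_0^r,\dots)$, which gives $(\mathcal{F}\mathcal{X}^T\mathcal{F}\mathcal{B}\mathcal{X})^T=\mathcal{F}(\mathcal{F}\mathcal{X}^T\mathcal{F}\mathcal{B}\mathcal{X})\mathcal{F}$ and hence the $(r,s)$ and $(s,r)$ blocks are equivalent. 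This step is what makes the below-diagonal parameters genuinely free and also underlies, via the identity $(\Psi_n^{krs})^T=(-1)^{\alpha_k+n+1}\Psi_n^{ksr}$, the parity compatibility you invoke for the diagonal equation.

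\textbf{Minor slips.} In the off-diagonal case the leading coefficient multiplying the new unknown $A_j^{rs}$ is $(A_0^{rr})^TB_0^r$, not $(A_0^{rr})^TB_0^rA_0^{rs}$; this is exactly what gets inverted as $A_0^{rr}(C_0^r)^{-1}$. In the diagonal case the paper obtains
\[
(A_0^{rr})^TB_0^rA_j^{rr}+(-1)^{j}(A_j^{rr})^TB_0^rA_0^{rr}=C_j^r-D_j^{rr},
\]
with the sign $(-1)^j$ coming from the alternating Toeplitz pattern of $\mathcal{Y}=\mathcal{B}\mathcal{X}$; your $(-1)^{j+1}$ and your description of the right-hand side's parity as ``$(-1)^{j+1}$-(skew-)symmetric'' are off by a sign and omit the role of $\alpha_r$. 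The correct compatibility is that $C_j^r-D_j^{rr}$ has the parity forced by $\alpha_r-j$, which one verifies from the transpose identity for $\Psi_n^{krr}$ rather than by a separate induction on $j$.
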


Observe that $\mathcal{X}$ in Lemma \ref{EqT} is consistent with the conditions (\ref{stabs0}), (\ref{stabs4}), (\ref{stabs2}) in Sec. \ref{secIG}. Next, the order of the calculation of the entries of $\mathcal{X}$ is the following:
\begin{itemize}[
wide=12pt,itemsep=2pt,leftmargin=25pt]
%
\item $\mathcal{X}_{rs}$ 
for $r>s$ (the blocks below the main diagonal of $\mathcal{X}$; chosen freely),

\item $(\mathcal{X}_{rr})_{11}=A_0^{rr}$ (the diagonals of the main diagonal blocks of $\mathcal{X}$),

\item $(\mathcal{X}_{r(r+1)})_{11}=A_{0}^{r(r+1)}$\hspace{-1mm} (the diagonals of the 1st upper off-diagonal blocks of $\mathcal{X}$),


\item[\ldots]
\item $(\mathcal{X}_{r(r+p)})_{1(j+1)}=A_{j+1}^{r(r+p)}$ with $1\leq p\leq N-r$, $1\leq j\leq \alpha_{r+p}-1$ (the $j$-th upper off-diagonals of the $p$-th upper off-diagonal blocks of $\mathcal{X}$),
\item[\ldots]
\item $(\mathcal{X}_{11})_{1(\alpha_1-1)}=A_{\alpha_1-1}^{11}$ (the last entry in the first row of $\mathcal{X}_{11}$).
\end{itemize}

\begin{proof}[Proof of Lemma \ref{EqT}]
Since $B_j^{r}$ is symmetric for $\alpha_r-j$ odd and skew-symmetric otherwise, one easily verifies that 
\small
\begin{align}\label{ETTE}
E_{\alpha_r}(I_{m_r})\big(T_a(B_0^{r},B_1^{r},\ldots,B_{\alpha_r-1}^{r})\big)^{T}E_{\alpha_r}(I_{m_r})
 & =
\left\{
\begin{array}{ll}
\hspace{-2mm}T_a\big(-(B_0^{r})^{T},(B_1^{r})^{T},\ldots,(-1)^{\alpha_r}(B_{\alpha_r-1}^{r})^{T}\big), & 
\hspace{-1mm}\alpha_r \textrm{ even}\\
\hspace{-2mm}T_a\big((B_0^{r})^{T},-(B_1^{r})^{T},\ldots,(-1)^{\alpha_r-1} (B_{\alpha_r-1}^{r})^{T}\big), & 
\hspace{-1mm}\alpha_r \textrm{ odd}
\end{array}
\right.\nonumber \\
 & =
T_a\big(B_0^{r},B_1^{r},\ldots,B_{\alpha_r-1}^{r}\big).
\end{align}
\normalsize
Therefore
\[
(\mathcal{F}\mathcal{X}^{T}\mathcal{F}\mathcal{B} \mathcal{X})^{T}=\mathcal{X}^{T}\mathcal{B}^{T}\mathcal{F}\mathcal{X}\mathcal{F} =
\mathcal{F}(\mathcal{F}\mathcal{X}^{T}\mathcal{F}\mathcal{B}\mathcal{X})\mathcal{F},
\]
so it suffices to compare the blocks in the upper triangular parts of $\mathcal{F}X^{T}\mathcal{F}\mathcal{B} X$ and $\mathcal{C}$.  
Since these blocks are rectangular upper triangular Toeplitz of the same size,
it is further enough to compare their first rows.
By simplifying the notation with $\mathcal{Y}:=\mathcal{B}\mathcal{X}$ and $\widetilde{\mathcal{X}}:=\mathcal{F}X^{T}\mathcal{F}$, 
the equation (\ref{eqFYFIY}) reduces to a system of equations:
\begin{align}
\label{f1}
(\mathcal{C}_{r(r+p)})_{1j}= & \big((\widetilde{\mathcal{X}}\mathcal{Y})_{r(r+p)}\big)_{1j} \qquad \qquad \qquad (1\leq j\leq \alpha_{r+p},\quad 0\leq p \leq N-r)  \nonumber\\
= & (\widetilde{\mathcal{X}}_{rr})_{(1)}(\mathcal{Y}_{r(r+p)})^{(j)}+\sum_{k=r+1}^{N}(\widetilde{\mathcal{X}}_{rk})_{(1)}(\mathcal{Y}_{k(r+p)})^{(j)}
 +\sum_{k=1}^{r-1}(\widetilde{\mathcal{X}}_{rk})_{(1)}(\mathcal{Y}_{k(r+p)})^{(j)}.
\end{align}

First, to calculate $A_0^{rr}$ for $r\in \{1,\ldots,N\}$ we observe (\ref{f1}) for $p=0$, $j=1$. Since
\[
(\widetilde{\mathcal{X}}_{rk})_{(1)}=\left\{
\begin{array}{ll}
\begin{bsmallmatrix}
(A_0^{kr})^{T} & * & \ldots & *
\end{bsmallmatrix}, & k\geq r\\
\begin{bsmallmatrix}
0 & * & \ldots & *
\end{bsmallmatrix}, & k<r
\end{array}
\right.,\qquad
(\mathcal{Y}_{kr})^{(1)}=
\left\{
\begin{array}{ll}
\begin{bsmallmatrix}
B_0^{k}A_0^{kr} \\
0\\
\vdots\\
0
\end{bsmallmatrix}, & k\leq r\\
0, & k>r
\end{array}
\right.,
\]
\vspace{-1mm}
we deduce $\sum_{k=1}^{N}(\widetilde{\mathcal{X}}_{rk})_{(1)}((\mathcal{Y})_{kr})^{(1)}=
(A_0^{rr})^{T}B_0^{r}A_0^{rr}$, and it implies:
\begin{equation}\label{GABA}
C_0^{r}=(A_0^{rr})^{T}B_0^{r}A_0^{rr}, \qquad r\in \{1,\ldots,N\}.
\end{equation}
To get (\ref{EqT2}), we arbitrarily set $\mathcal{X}_{rs}$ for $N\geq 2$, $r>s$ (below the main diagonal of $\mathcal{X}$).

Next, we inductively compute the remaining entries.
We fix $p\in \{0,\ldots, N-1 \}$, $j\leq \alpha_r-1$ (but not $p=j=0$) and solve (\ref{f1}) to get $A_j^{r(r+p)}$ (loops $j=0:\alpha_1-1$, $p=1:N-1$ 
in (\ref{EqT3ca})),
while assuming that we have already determined $A_{n}^{rs}$ for 
%
\begin{align}\label{induA1}
&j\geq 1, n\in \{0,\ldots,j-1\}, s\geq r 
\quad\textrm{or}\quad p\geq 1, n=j, r\leq s\leq r+p-1 \\
&\textrm{ or } \quad s\leq r, n\in \{0,\ldots,b_{rs}-1\}, N\geq 2, \qquad (1 \leq r, s\leq N).\nonumber
\end{align}
\normalsize

We have
\begin{align*}
\widetilde{\mathcal{X}}_{rk}
=
E_{\alpha_r}(I_{m_r})\mathcal{X}_{kr}^{T} E_{\alpha_k}(I_{m_k})
&=
\left\{
\begin{array}{ll}
\hspace{-1mm}\begin{bsmallmatrix}
\widetilde{\mathcal{T}}_{rk}\\
0
\hspace{-1mm}\end{bsmallmatrix}, 
& \alpha_r >\alpha_k \\
\hspace{-1mm}\begin{bsmallmatrix}
0 & \widetilde{\mathcal{T}}_{rk}
\end{bsmallmatrix}, & \alpha_r<\alpha_k\\
\widetilde{\mathcal{T}}_{rk}, & \alpha_r=\alpha_k
\end{array}
\right.\hspace{-1mm}, \quad
\widetilde{T}_{rk}=
T\big((A_0^{kr})^{T},\ldots,(A_{b_{kr}-1}^{kr})^{T}\big),
\end{align*}
%
%
%
\begin{align}\label{YSP}
&\mathcal{Y}_{ks} 
=
\left\{
\begin{array}{ll}
\hspace{-1mm}\begin{bsmallmatrix}
\mathcal{S}_{ks}\\
0
\end{bsmallmatrix}, 
& \alpha_k >\alpha_s \\
\hspace{-1mm}\begin{bsmallmatrix}
0 & \mathcal{S}_{ks}
\end{bsmallmatrix}, & \alpha_k<\alpha_s\\
\hspace{-1mm}\mathcal{S}_{ks}, & \alpha_k=\alpha_s
\end{array}
\right. \hspace{-2mm}, \quad  
\mathcal{S}_{ks} 
=T_a\big(\Phi_0^{ks},\ldots, \Phi_{b_{ks}-1}^{ks}\big),\quad
\Phi_{n}^{ks}:=\sum_{i=0}^{n} B_{n-i}^{k}A_i^{ks}.
\end{align}
To simplify the computations we set ($k,r,s\in \{1,\ldots,N\}$, $n\in \{0,\ldots, b_{rs}-1\}$):
\begin{align}
\label{Prsk}
\Psi^{krs}_{n}:=
&
\left\{
\begin{array}{ll}
\hspace{-1mm} 
\begin{bsmallmatrix}
(A_0^{kr})^{T} & (A_1^{kr})^{T} & \ldots & (A_{n}^{rr})^{T} 
\end{bsmallmatrix}
\begin{bsmallmatrix}
\Phi_{n}^{ks} \\
-\Phi_{n-1}^{ks} \\
\vdots \\
(-1)^{n}\Phi_{0}^{ks} \\
\end{bsmallmatrix}, & \hspace{-1mm} n\geq 0
\\
\hspace{-1mm} 0, & \hspace{-1mm} n<0
\end{array}
\right.\\
=
&
\left\{
\begin{array}{ll}
\hspace{-1mm}  \sum_{i=0}^{n}(-1)^{i}(A_i^{kr})^{T}\Phi_{n-i}^{ks}, & \hspace{-1mm}n\geq 0\\
\hspace{-1mm} 0, & \hspace{-1mm} n<0
\end{array}
\right.\hspace{-2mm} \nonumber
\end{align}
\normalsize
and by using the fact $(B_n^{r})^{T}=(-1)^{\alpha_r-n+1}$, we observe the following:
%
\begin{align}\label{simetrija}
(\Psi^{krs}_{n})^{T}
&
=\sum_{i=0}^{n}(-1)^{n-i}(A_l^{ks})^{T}\big(\sum_{l=0}^{i}( B_{i-l}^{k})^{T}A_{n-i}^{kr}\big)\nonumber
=\sum_{l=0}^{n}\sum_{i=l}^{n}(-1)^{n+\alpha_k+l+1}(A_l^{ks})^{T} B_{i-l}^{k}  A_{n-i}^{kr}
\\
&
=(-1)^{\alpha_k+1}\sum_{l=0}^{n}(-1)^{n-l}(A_{l}^{ks})^{T} \sum_{i'=0}^{n-l} B_{i'}^{k}A_{n-l-i'}^{kr}
=(-1)^{\alpha_k+n+1}\Psi^{ksr}_{n}.
\end{align}
%
Furthermore,
\begin{align}\label{psijr}
(\widetilde{\mathcal{X}}_{rk})_{(1)}(\mathcal{Y}_{k(r+p)})^{(n+1)}
&=
\left\{
\begin{array}{ll}
\Psi_{n-\alpha_{r+p}+\alpha_k}^{kr(r+p)}, &  k\geq r+p+1\\
\Psi_{n}^{kr(r+p)},                       &  r+p \geq k\geq r+1, p\geq 1\\
\Psi_{n-\alpha_k+\alpha_r}^{kr(r+p)},     &  k\leq r\\
\end{array}
\right..
\end{align}
In particular, for $\xi_j^{r(r+p)}$ defined as in the algorithm in (\ref{EqT3ca}), we deduce:
%
\begin{align}
\label{xijrp}
(\widetilde{\mathcal{X}}_{rr})_{(1)}(\mathcal{Y}_{r(r+p)})^{(j+1)}
=\xi_j^{r(r+p)}+
\left\{\begin{array}{ll}
\hspace{-2mm}(A_0^{rr})^{T}B_0^{r}A_j^{rr}+ (-1)^{j}(A_j^{rr})^{T}B_0^{r}A_0^{rr}, &  \hspace{-1mm}
p=0\\
\hspace{-2mm}(A_0^{rr})^{T}B_0^{r}A_j^{r(r+p)}, & \hspace{-1mm} p\geq 1
\end{array}
\right.\hspace{-1mm}. 
\end{align}
\normalsize

Next, we have
\small
\begin{align*}
\begin{array}{l}
(\widetilde{\mathcal{X}}_{rk})_{(1)}=
\left\{
\begin{array}{ll}
\hspace{-1mm}\begin{bsmallmatrix}
(A_0^{kr})^{T} & (A_1^{kr})^{T} & \ldots & (A_{\alpha_{r}-1}^{kr})^{T}
\end{bsmallmatrix}, & \hspace{-1mm} k\geq r\\
\\
\hspace{-1mm}\begin{bsmallmatrix}
0& \ldots & 0 &(A_0^{kr})^{T}  & \ldots &(A_{b_{kr}}^{kr})^{T} 
\end{bsmallmatrix}, & \hspace{-1mm} k<r
\end{array}
\right.\hspace{-1mm},
\\
\\
(\mathcal{Y}_{(r+p)(r+p)})^{(j+1)}=
\begin{bsmallmatrix}
\Phi_j^{r(r+p)} \\
-\Phi_{j-1}^{r(r+p)} \\
\vdots\\
(-1)^{j}\Phi_0^{r(r+p)}
\end{bsmallmatrix},
\end{array}
%
\hspace{-2mm}
(\mathcal{Y}_{k(r+p)})^{(j+1)}=
\left\{\begin{array}{ll}
\hspace{-1mm}\begin{bsmallmatrix}
\Phi_j^{r(r+p)} \\
\vdots\\
(-1)^{j}\Phi_0^{r(r+p)}\\
0\\
\vdots\\
0
\end{bsmallmatrix}, & \hspace{-4mm} r+p > k\\
\hspace{-1mm} \begin{bsmallmatrix}
\Phi_{j-\alpha_{r+p}-\alpha_k}^{r(r+p)} \\
\vdots\\
(-1)^{j-\alpha_{r+p}-\alpha_k}\Phi_0^{r(r+p)}\\
0\\
\vdots\\
0
\end{bsmallmatrix}, & \hspace{-2mm} k> r+p
\end{array}
\right.
\hspace{-2mm}.
\end{align*}
\normalsize
Hence, the last two terms in (\ref{f1}) for $j+1$ instead of $j$ ($N\geq r+1\geq 2$) are:
%
\begin{align}\label{thjrp}
\Xi (j,r,p):=
& \sum_{k=r+1}^{N}(\widetilde{\mathcal{X}}_{rk})_{(1)}(\mathcal{Y}_{k(r+p)})^{(j+1)}\\
=
&\left\{\begin{array}{ll}
\sum_{k=r+1}^{N}\Psi_{j-\alpha_{r}+\alpha_k}^{krr}, & j\geq 1, p=0\\
\sum_{k=r+1}^{r+p}\Psi_{j}^{kr(r+p)}
+\sum_{k=r+p+1}^{N}\Psi_{j-\alpha_{r+p}+\alpha_k}^{kr(r+p)}, & j\geq 0, p\geq 1. 
\end{array}
\right.,\nonumber\\
%
\label{lajrp}
\Lambda(j,r,p):=
&  \sum_{k=1}^{r-1}(\widetilde{\mathcal{X}}_{rk})_{(1)}(\mathcal{Y}_{k(r+p)})^{(j+1)}=\sum_{k=1}^{r-1}\Psi_{j-\alpha_k+\alpha_r}^{kr(r+p)}.
\end{align}
For $j,p\geq 0$ with $j+p\geq 1$ we define
\begin{equation}\label{Djrp}
D_j^{r(r+p)}:=\xi_j^{r(r+p)}+\Xi(j,r,p)+\Lambda(j,r,p).
\end{equation}
We combine (\ref{f1}) with (\ref{xijrp}), (\ref{thjrp}), (\ref{lajrp}), (\ref{Djrp}) to obtain:
\begin{align}
\label{eqATB}
&(A_0^{rr})^{T}B_0^{r}A_j^{r(r+p)}=-D_j^{r(r+p)},\qquad p\geq 1,\\
&(A_0^{rr})^{T}B_0^{r}A_j^{rr}+(-1)^{j}(A_j^{rr})^{T}B_0^{r}A_0^{rr}=C_j^{r}-D_j^{rr}, \qquad j\geq 1\,\,\,\,\, (p=0).\nonumber
\end{align}
From (\ref{Prsk}) we get 
$\Psi^{krr}_{j-\alpha_r+\alpha_k}=(-1)^{\alpha_r-j+1}(\Psi^{krr}_{j-\alpha_r+\alpha_k})^{T}$, thus $\xi_j^{rr}$, $\Xi(j,r,0)$, $\Lambda(j,r,0)$, $C_j^{r}-D_j^{rr}$ are skew-symmetric (symmetric) for $\alpha_r-j$ odd (even).

Since (\ref{GABA}) implies 
$A_0^{rr}(C_0^{r})^{-1}=
((A_0^{rr})^{T}B_0^{r})^{-1}$, 
the first equation of
(\ref{eqATB}) yields $A_j^{r(r+p)}=-A_0^{r}(C_0^{r})^{-1}D_j^{r(r+p)}$ for $p\geq 1$.
Next, we get $A_j^{rr}$ by solving the second equation (\ref{eqATB}). It is of the form $A^{T}X+(-1)^{j}X^{T}A=B$
for either $j$ odd and $B$ skew-symmetric or $j$ even and $B$ symmetric. 
The solution of this equation is $X=(A^{T})^{-1}(Z+\frac{1}{2}B)$ with $Z$ symmetric for $j$ odd and $Z$ skew-symmetric for $j$ even.
We have $A=B_0^{r} A_0^{rr}$ (hence
$(A^{T})^{-1}=
A_0^{rr}(C_0^{r})^{-1}$)
and $B=C_j^{r}-D_j^{rr}$, which depend only on $A_n^{rs}$ with $n,r,s$ satisfying (\ref{induA1}).

It is only left to sum up the dimension (\ref{cdim}):
\small
\begin{align*}
%
&\sum_{r=1}^{N}\sum_{s=1}^{r-1}\alpha_s m_r m_s
+ \hspace{-1.5mm}\sum_{\alpha_r \textrm{ even}}\hspace{-1.5mm}\big(
\tfrac{\alpha_r}{4}(m_r^2-m_r)+\tfrac{\alpha_r}{4}(m_r^2+m_r)\big)+\hspace{-1.5mm}
\sum_{\alpha_r \textrm{ odd}}\hspace{-1.5mm}\big(
\tfrac{\alpha_r-1}{4}(m_r^2-m_r)+
\tfrac{\alpha_r+1}{4}(m_r^2+m_r)\big).
\end{align*}
\normalsize
%
A straightforward simplification yields 
$\sum_{r=1}^{N}\big(\tfrac{1}{2}m_r^{2}\alpha_r+\sum_{s=1}^{r-1}\alpha_s m_r m_s\big) -
\sum_{\alpha_r \textrm{ odd}}\tfrac{1}{2} m_r$.
%
This completes the proof of the lemma.
\end{proof}

\begin{example}
We solve $\mathcal{F}\mathcal{X}^{T}\mathcal{F}\mathcal{B}\mathcal{X}=\mathcal{B}$ for $\mathcal{F}=E_3(I)\oplus E_2(I)$, $\mathcal{B}=\mathcal{C}=(J\oplus -J\oplus J)\oplus (I \oplus -I)$ with the identity $I$ and nonsingular skew-symmetric $J$, and $\mathcal{X}$ partitioned conformally to $\mathcal{B}$. 
We have:
\small
\begin{align*}
&\qquad
\mathcal{B}=(\mathcal{F}\mathcal{X}^{T}\mathcal{F})(\mathcal{B}\mathcal{X})
=\begin{bmatrix}[ccc|c c]
A_1^{T} & B_1^{T} & C_1^{T}  &  P^{T}  &  Q^{T} \\  
0   & A_1^{T} & B_1^{T}     &  0      &   P^{T} \\ 
0   & 0   & A_1^{T}      &  0   & 0   \\  
\hline
0    & M^{T}  & N^{T}    &  A_2^{T}  &  B_2^{T} \\  
 0   & 0     & M^{T}     &  0       &   A_2 ^{T}  
\end{bmatrix}
\begin{bmatrix}[ccc|cc]
JA_1 & JB_1 & JC_1  &  JM  &  JN \\  
0   & -JA_1 & -JB_1    &  0  &   -J M  \\  
0   & 0   & J A_1     &  0  & 0   \\  
\hline
0     & P  & Q                &  A_2    &  B_2 \\  
0     & 0   & -P              &  0    &  -A_2    
\end{bmatrix}=\\
&=
\begin{bmatrix}[ccc|cc]
A_1^{T}JA_1 & A_1^{T}JB_1\hspace{-0.5mm}-\hspace{-0.5mm}B_1^{T} J A_1 & A_1^{T}J C_1\hspace{-0.5mm}+\hspace{-0.5mm}C_1^{T}J A_1\hspace{-0.5mm}-\hspace{-0.5mm}B_1^{T}J  B_1    & \hspace{-0.5mm} A_1^{T}JM\hspace{-0.5mm}+\hspace{-0.5mm}P^{T}A_2 \hspace{-0.5mm} &  A_1^{T}J N+B_1^{T}JM  \\ 
& & + P^{T}Q - Q^{T} P & & + P^{T}B_2- Q^{T}A_2 \\ 
  0         & -A_1^{T}JA_1 &  -A_1^{T} J B_1 +B_1^{T}J A_1    &  0   &   -A_1^{T}JM - P^{T}A_2 \\ 
0   &     0                  & A_1^{T}J A_1                        &  0  & 0             \\  
\hline 
   &    &                     &  A_{2}^{T} A_2       &   A_2^{T}B_2 - B_2^{T} A_2 \\ 
	 &    &                     &                       &     -M^{T}JM                          \\
   &    &                       &      0           &    + A_2^{T}A_2         
\end{bmatrix}
\end{align*}
\normalsize
By comparing diagonals of the main diagonal blocks we get $J=(A_1)^{T} J A_1$, $I=(A_2)^{T}A_2$ ($A_1$ $J$-pseudo-orthogonal, $A_2$ orthogonal). Next, choose $P,Q$ arbitra\-rily. The diagonal element of the upper right block gives $A_1^{T}JM+P^{T}A_2=0$, 
thus $M=A_1^{} J^{-1}P^{T}A_2$.

The first super diagonals yield $A_1^{T}JB_1-B_1^{T} J A_1=0$, $A_2^{T}B_2 - B_2^{T} A_2-M^{T}JM=0$ and $A_1^{T}JN-B_1^{T}M+P^{T}B_2-Q^{T}A_2$, thus $B_1=A_1J^{-1}Z_1$ with $Z_1$ skew-symmetric, $B_2=\frac{1}{2}A_2J^{-1}M^{T}JM+ A_2J^{-1}Z_2$ with $Z_2$ symmetric, $N=A_1 J^{-1}(B_1^{T}M-P^{T}B_2+Q^{T}A_2)$, respectively.
Finally, the second supper diagonal yields $A_1^{T}J C_1+C_1^{T}J A_1-B_1^{T}J B_1 + P^{T}Q - Q^{T} P=0$, therefore $C_1=\frac{1}{2}A_1 J^{-1}(B_1^{T}J B_1 - P^{T}Q + Q^{T} P)+A_1 J^{-1} Z_2
$ with symmetric $Z_3$.
\end{example}

Solutions of the equation (\ref{eqFYFIY}) with $\mathcal{C}=\mathcal{B}$ have a nice decomposition.

\begin{lemma}\label{lemauni}
Let $\mathbb{T}_{}^{\alpha,\mu}$ for $\alpha=(\alpha_1,\ldots,\alpha_n)$ with $\alpha_{1}>\ldots >\alpha_{N}$ and $\mu=(m_1,\ldots,m_N)$ be as 
in (\ref{0T0}), and let $\mathbb{X}_{}\subset \mathbb{T}_{}^{\alpha,\mu}$ be the set of solutions of the equation (\ref{eqFYFIY}) for $\mathcal{C}=\mathcal{B}$ as in (\ref{BBF}).
Then $\mathbb{X}$ is a group, more precisely, a semidirect product of subgroups 
\[
\mathbb{X}_{}=\mathbb{O}_{}\ltimes \mathbb{V}_{}\subset \mathbb{T}_{}^{\alpha,\mu},
\]
in which $\mathbb{O}_{}$ consists of all matrices $\mathcal{Q}=\bigoplus_{r=1}^{N}\big(\bigoplus_{j=1}^{\alpha_r} Q_{r}\big)$ for $Q_{r}\in \mathbb{C}^{m_r\times m_r}$ such that $
Q_{r}^{T}B_0^{r}Q_{r}=B_0^{r}:=[\mathcal{B}_{rr}]_{11}$, while any $\mathcal{V}\in \mathbb{V}_{}$ 
can be written as 
$\mathcal{V}=\prod_{j=0}^{n_{\mathcal{V}}}\mathcal{U}_j$, where  
$\mathcal{U}_0=\bigoplus_{r=1}^{N}\mathcal{W}_r$ with $\mathcal{W}_r$ upper unitriangular Toeplitz 
and $\mathcal{U}_1,\ldots,\mathcal{U}_{n_{\mathcal{V}}}$ are of the form (\ref{0T0}) 
with (\ref{Hptk}). Moreover, if $\mathcal{B}=\bigoplus_{r=1}^{N}\big(\bigoplus_{j=1}^{\alpha_r} (-1)^{j} B_0^{r}\big)$, then $\mathcal{U}_0$ is of the form (\ref{asZ}), and $\mathcal{U}_1,\ldots,\mathcal{U}_{n_{\mathcal{V}}}$
are of the form (\ref{0T0}) with (\ref{Hptk}), (\ref{Hptk2}) for $B_r=B_0^{r}$. (The subgroup $\mathbb{V}_{}$ is normal and unipotent of order at most $\leq \alpha_1-1$.) 
\end{lemma}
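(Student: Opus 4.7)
The plan is to first establish that $\mathbb{X}$ is a subgroup of $\mathbb{T}^{\alpha,\mu}$, then identify a ``Levi piece'' $\mathbb{O}$ and a ``unipotent radical'' $\mathbb{V}$ realizing the semidirect product, and finally exhibit the product decomposition of $\mathcal{V}\in\mathbb{V}$ via an elimination procedure using matrices of the form (\ref{Hptk}).

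For the group structure, I will use that $\mathcal{F}^2 = I$ (since $E_{\alpha_r}(I_{m_r})^2 = I_{\alpha_r m_r}$), so (\ref{eqFYFIY}) with $\mathcal{C}=\mathcal{B}$ rewrites as $\mathcal{X}^T(\mathcal{F}\mathcal{B})\mathcal{X} = \mathcal{F}\mathcal{B}$, placing $\mathcal{X}$ in the preserver of a fixed bilinear form. Closure under products is immediate from
\[
\mathcal{F}(\mathcal{X}_1\mathcal{X}_2)^T\mathcal{F}\mathcal{B}(\mathcal{X}_1\mathcal{X}_2)=\mathcal{F}\mathcal{X}_2^T\mathcal{F}(\mathcal{F}\mathcal{X}_1^T\mathcal{F}\mathcal{B}\mathcal{X}_1)\mathcal{X}_2=\mathcal{F}\mathcal{X}_2^T\mathcal{F}\mathcal{B}\mathcal{X}_2=\mathcal{B},
\]
and closure under inverses follows because $\mathbb{T}^{\alpha,\mu}$ is a group by Proposition \ref{lemanilpo}. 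I then take $\mathbb{O}$ to consist of all $\mathcal{Q}=\bigoplus_r(\bigoplus_j Q_r)$ with $Q_r$ being $B_0^r$-pseudo-orthogonal; substitution into (\ref{eqFYFIY}) reduces block by block to $Q_r^T B_0^r Q_r = B_0^r$, so $\mathbb{O}\subset\mathbb{X}$ is a subgroup. Set $\mathbb{V} := \mathbb{X}\cap\mathbb{U}$, where $\mathbb{U}$ is the unipotent normal subgroup of $\mathbb{T}^{\alpha,\mu}$ from Proposition \ref{lemanilpo}; normality of $\mathbb{V}$ in $\mathbb{X}$ is inherited from that of $\mathbb{U}$ in $\mathbb{T}^{\alpha,\mu}$. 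By Lemma \ref{EqT}(\ref{EqT2}) every $\mathcal{X}\in\mathbb{X}$ has $B_0^r$-pseudo-orthogonal leading block $A_0^{rr}$, so $\mathcal{Q}_\mathcal{X} := \bigoplus_r(\bigoplus_j A_0^{rr})\in\mathbb{O}$ and $\mathcal{Q}_\mathcal{X}^{-1}\mathcal{X}\in\mathbb{V}$; since $\mathbb{O}\cap\mathbb{V}=\{I\}$, this yields $\mathbb{X} = \mathbb{O}\ltimes\mathbb{V}$.

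The core of the proof is the product decomposition of $\mathcal{V}\in\mathbb{V}$. A matrix of the form (\ref{Hptk}) has $(t,p)$-block equal to $N_{\alpha_t}^k(F)$, a single super-diagonal (at level $k$) populated by the freely chosen $F$, while its $(p,p)$ and $(t,t)$ blocks are upper unitriangular Toeplitz with coefficients determined so that the matrix lies in $\mathbb{V}$. My strategy is to sweep through pairs $(p,t)$ with $p<t$ and super-diagonal indices $k=0,1,\dots,\alpha_t-1$ in a prescribed order, at each step right-multiplying $\mathcal{V}$ by the inverse of a suitable matrix of form (\ref{Hptk})---with $F$ matching the current target super-diagonal of $\mathcal{V}_{tp}$---so as to zero it out. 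Once all below-diagonal blocks have been cleared, what remains is block diagonal with upper unitriangular Toeplitz diagonal blocks, which I take as $\mathcal{U}_0 = \bigoplus_r\mathcal{W}_r$; reading the sequence of multiplications in reverse gives $\mathcal{V} = \mathcal{U}_0\mathcal{U}_1\cdots\mathcal{U}_{n_\mathcal{V}}$. In the alternating case $\mathcal{B}=\bigoplus_r(\bigoplus_j(-1)^jB_0^r)$, the residual block-wise equation $\mathcal{W}_r^T\tilde B_r\mathcal{W}_r = \tilde B_r$ decouples, and Example (\ref{asZ}) identifies its unitriangular Toeplitz solutions as precisely those parametrized by free $Z_j^r$ in (\ref{asZ}), forcing $\mathcal{U}_0$ and (via the explicit coefficients (\ref{Hptk2})) each $\mathcal{U}_j$ with $j\ge 1$ into the stated forms.

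The main obstacle will be the combinatorial bookkeeping of the sweep: each multiplication by a matrix of form (\ref{Hptk}) simultaneously perturbs the $(p,p)$, $(p,t)$, $(t,p)$, and $(t,t)$ blocks, so I must select a sweep order---for instance, processing pairs $(p,t)$ with larger $\alpha_p-\alpha_t$ first, and within each pair sweeping $k=0,1,2,\dots$---and verify that this order never reintroduces nonzero entries into positions already cleared. This parallels the argument of \cite[Lemma 4.1]{TSOS} but requires subtle modifications reflecting the alternating/skew-symmetric nature of $\mathcal{B}$ here. The unipotency bound $n_\mathcal{V}\le\alpha_1-1$ then follows from $\mathbb{V}\subset\mathbb{U}$ and the nilpotency class of $\mathbb{U}$ stated in Proposition \ref{lemanilpo}.
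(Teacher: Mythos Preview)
Your approach is essentially the same as the paper's: the paper does not give an independent argument but simply states that the proof works \emph{mutatis mutandis} as \cite[Lemma 4.2]{TSOS} (you cite Lemma 4.1, but the relevant decomposition lemma is 4.2), and your sketch---group structure via form preservation, $\mathbb{O}\ltimes\mathbb{V}$ via the Levi/unipotent splitting of Proposition~\ref{lemanilpo}, and the sweep by elementary factors of type (\ref{Hptk})---is exactly that argument. One small correction: in your last sentence you conflate the number of factors $n_{\mathcal{V}}$ with the parenthetical ``unipotent of order at most $\alpha_1-1$''; the latter refers to the nilpotency class of $\mathbb{V}$ (i.e.\ $(\mathcal{V}-I)^{\alpha_1}=0$ for $\mathcal{V}\in\mathbb{V}$), not to a bound on $n_{\mathcal{V}}$, which in general is of order $\sum_{p<t}\alpha_t$.
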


%

The proof of the lemma works mutatis mutandis as the proof of \cite[Lemma 4.2]{TSOS} 
with upper triangular Toeplitz diagonal blocks (not alternating).

\section{Proofs of the main results}\label{sec2}

Before we begin with the proofs of our theorems we recall the classical result \cite[Ch. VIII]{Gant} on 
solutions of a homogeneous Sylvester equation.

\begin{proposition} \label{resAoXXA} 
Suppose $\mathcal{J}$ is the Jordan canonical form (with direct summands of the form (\ref{Jblock})) and let us consider the following matrix equation:
\begin{align}\label{eqJYYJ}
\mathcal{J}X=X\mathcal{J}
\end{align}  
\begin{enumerate}[label={\arabic*.},ref={\arabic*},
leftmargin=20pt]
\item \label{resAoXXA1}
Let further $\lambda_1,\ldots,\lambda_n\in \mathbb{C}$ be pairwise distinct eigenvalues of $\mathcal{J}=\bigoplus_{j=1}^{n}\mathcal{J}_j$,
in which $\mathcal{J}_j$ consists of all summands that correspond to the eigenvalue $\lambda_j$.
Then the solution of (\ref{eqJYYJ}) is of the form $Y=\bigoplus_{j=1}^{n} X_j$ with $X_j$ a solution of $\mathcal{J}_j X_j=X_j\mathcal{J}_j$.

\item \label{resAoXXA2}
Assume $\mathcal{J}=\bigoplus_{r=1}^{N}\big(\bigoplus_{j=1}^{m_r} J_{\alpha_r}(\lambda)\big)$, $\lambda \in \mathbb{C}$ (i.e. $\mathcal{J}$ has precisely one eigenvalue $\lambda$) and $\alpha_1>\ldots>\alpha_N$. Then the solution of (\ref{eqJYYJ}) is $X=[X_{rs}]_{r,s=1}^{N}$, where every block $X_{rs}$ is further an $m_r$-by-$m$ block matrix with $\alpha_r$-by-$ \alpha_s$ blocks of the form
\begin{equation}\label{QTY}
\left\{\begin{array}{ll}
\begin{bmatrix} 
0 & T
\end{bmatrix}, & \alpha_r<\alpha_s \\
\begin{bmatrix}
T\\
0
\end{bmatrix}, & \alpha_r>\alpha_s\\
T, & \alpha_r=\alpha_s
\end{array}
\right.,
\end{equation}
with $T$ upper triangular Toeplitz matrix of size $b_{rs}\times b_{rs}$; $b_{rs}:=\min\{\alpha_r,\alpha_s\}$. 
\end{enumerate}
\end{proposition}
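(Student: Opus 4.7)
The plan is to reduce to the nilpotent case and then read off the block structure from the explicit commutant of a shift.

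For part (\ref{resAoXXA1}), I would invoke the standard fact that the homogeneous Sylvester equation $AX=XB$ has only the zero solution whenever $A$ and $B$ have disjoint spectra. Partition $X=[X_{ij}]_{i,j=1}^{n}$ conformally with the eigenvalue-grouped decomposition $\mathcal{J}=\bigoplus_{j}\mathcal{J}_{j}$. The equation (\ref{eqJYYJ}) then separates into the block equations $\mathcal{J}_{i}X_{ij}=X_{ij}\mathcal{J}_{j}$. Since the spectra of $\mathcal{J}_{i}$ and $\mathcal{J}_{j}$ are $\{\lambda_{i}\}$ and $\{\lambda_{j}\}$, which are disjoint for $i\neq j$, all off-diagonal blocks vanish, and what survives are the diagonal relations $\mathcal{J}_{j}X_{jj}=X_{jj}\mathcal{J}_{j}$. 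This reduces the problem to part (\ref{resAoXXA2}).

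For part (\ref{resAoXXA2}), I would first replace $\mathcal{J}$ by $\mathcal{J}-\lambda I$, which turns (\ref{eqJYYJ}) into $\mathcal{N}X=X\mathcal{N}$ with $\mathcal{N}=\bigoplus_{r=1}^{N}\bigl(\bigoplus_{j=1}^{m_r}J_{\alpha_r}(0)\bigr)$ nilpotent; the set of solutions is unchanged. Block $X$ conformally: index first by the outer blocks $(r,s)$ of sizes $m_{r}\alpha_{r}\times m_{s}\alpha_{s}$, then within each such block by the $m_{r}\times m_{s}$ scalar sub-blocks of size $\alpha_{r}\times\alpha_{s}$. The equation splits into independent systems $J_{\alpha_{r}}(0)\,T=T\,J_{\alpha_{s}}(0)$ for each such $\alpha_{r}\times\alpha_{s}$ block $T$.

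The core step is the explicit solution of this shift-commutation equation. Writing $T=[t_{ij}]$, the identity $J_{\alpha_{r}}(0)T=TJ_{\alpha_{s}}(0)$ translates into $t_{(i+1)j}=t_{i(j-1)}$, with the convention that entries with out-of-range indices equal zero. This forces $T$ to be constant along diagonals $j-i=\mathrm{const}$, i.e.\ Toeplitz. The boundary rows/columns kill the subdiagonal part: $t_{ij}=0$ whenever $j<i$ (because iterating the relation pushes such an entry off the edge), and further, when the shapes are unequal the excess rows (for $\alpha_r>\alpha_s$) or columns (for $\alpha_r<\alpha_s$) must vanish entirely. What remains is an upper triangular Toeplitz $b_{rs}\times b_{rs}$ matrix with $b_{rs}=\min\{\alpha_r,\alpha_s\}$, padded by zeros as in (\ref{QTY}). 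This is a classical computation that can be organized cleanly by viewing $J_{\alpha}(0)$ as right-multiplication by a shift on the polynomial module $\mathbb{C}[t]/(t^{\alpha})$ and $T$ as a module homomorphism.

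The main obstacle is the bookkeeping in the last step: carefully handling the three cases $\alpha_r<\alpha_s$, $\alpha_r=\alpha_s$, $\alpha_r>\alpha_s$ and verifying which entries are forced to zero by the shift recursion at the boundary. Everything else is a straightforward reduction, and the final arrangement into the $N\times N$ block form $[X_{rs}]$ with each $X_{rs}$ being an $m_r\times m_s$ grid of the $\alpha_{r}\times\alpha_{s}$ blocks described above follows immediately from the block decomposition we imposed at the start.
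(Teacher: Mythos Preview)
Your argument is correct and is the standard derivation of the commutant of a Jordan matrix. The paper, however, does not prove this proposition at all: it is stated as a classical fact and simply attributed to Gantmacher \cite[Ch.~VIII]{Gant}. So there is nothing to compare beyond noting that you have supplied the textbook proof that the paper takes for granted; your reduction to the nilpotent shift $J_{\alpha_r}(0)T=TJ_{\alpha_s}(0)$ and the Toeplitz recursion $t_{(i+1)j}=t_{i(j-1)}$ is exactly how Gantmacher organizes it.
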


Observe that Proposition \ref{resAoXXA} (\ref{resAoXXA2}) remains valid if we replace all $J_{\alpha_r}(\lambda)$ by $e^{J_{\alpha_r}(\lambda)}$.

\begin{lemma}\label{resAoXXAo} 
Assume $\mathcal{J}=\bigoplus_{r=1}^{N}\big(\bigoplus_{j=1}^{m_r} e^{J_{\alpha_r}(\lambda)}\big)$, $\lambda \in \mathbb{C}$.
Then the solution of the equation $\mathcal{J}X=X\mathcal{J}$ coincides with the solution $X$ in the conclusion of Proposition \ref{resAoXXA} (\ref{resAoXXA2}). 
\end{lemma}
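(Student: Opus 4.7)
The plan is to reduce the lemma directly to Proposition \ref{resAoXXA} (\ref{resAoXXA2}) by showing that the matrix $\mathcal{J}$ in the statement has the same commutant as $\mathcal{A}:=\bigoplus_{r=1}^{N}\big(\bigoplus_{j=1}^{m_r} J_{\alpha_r}(\lambda)\big)$, since the commutant only depends on the unital algebra generated by the matrix.

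First, I would observe that $\mathcal{J}=e^{\mathcal{A}}$ and hence is a polynomial in $\mathcal{A}$: writing $\mathcal{N}:=\mathcal{A}-\lambda I$, the matrix $\mathcal{N}$ is nilpotent of index at most $\alpha_1$, so
\[
\mathcal{J}= e^{\lambda}\sum_{k=0}^{\alpha_1 - 1}\frac{\mathcal{N}^{k}}{k!},
\]
which is a polynomial in $\mathcal{A}$. Conversely, $\mathcal{J}-e^{\lambda}I=e^{\lambda}(e^{\mathcal{N}}-I)$ is nilpotent (of the same index $\alpha_1$, thanks to the non-vanishing first superdiagonal coming from $\mathcal{N}$), so the truncated logarithm series
\[
\mathcal{N}=\log\bigl(I+e^{-\lambda}(\mathcal{J}-e^{\lambda}I)\bigr)=\sum_{k=1}^{\alpha_1-1}\frac{(-1)^{k+1}}{k}\bigl(e^{-\lambda}(\mathcal{J}-e^{\lambda}I)\bigr)^{k}
\]
expresses $\mathcal{N}$, and therefore $\mathcal{A}=\lambda I + \mathcal{N}$, as a polynomial in $\mathcal{J}$.

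Consequently $\mathcal{A}$ and $\mathcal{J}$ generate the same commutative subalgebra of matrices, and thus their commutants in the full matrix algebra coincide. In particular, $\mathcal{J}X=X\mathcal{J}$ is equivalent to $\mathcal{A}X=X\mathcal{A}$, and Proposition \ref{resAoXXA} (\ref{resAoXXA2}) applied to $\mathcal{A}$ gives precisely the block Toeplitz structure (\ref{QTY}) for $X$.

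The only subtle point to check is that the logarithm inversion is valid uniformly in $\lambda$ (including $\lambda=0$ and $\lambda$ with $e^{\lambda}=-1$), but the only feature used is the nilpotency of $\mathcal{J}-e^{\lambda}I$, which holds for every $\lambda\in\mathbb{C}$. No further delicate analysis is required, so this short argument completes the lemma.
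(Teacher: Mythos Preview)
Your argument is correct and takes a genuinely different route from the paper. The paper proceeds block by block: it reduces $e^{J_{\alpha_r}(\lambda)}Y=Ye^{J_{\alpha_s}(\lambda)}$ to $e^{J_{\alpha_r}(0)}Y=Ye^{J_{\alpha_s}(0)}$, then conjugates $e^{J_{\alpha}(0)}$ to its Jordan form $J_{\alpha}(1)$ via a transition matrix $S_{\alpha}$, invokes Proposition~\ref{resAoXXA} (\ref{resAoXXA2}) to see that the solution space of the transformed equation has dimension $b_{rs}=\min\{\alpha_r,\alpha_s\}$, and finally checks by hand that every matrix of the shape (\ref{QTY}) solves the original block equation, so by dimension count these are all of them. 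Your approach avoids the transition matrices and the dimension comparison entirely: by exhibiting $\mathcal{J}$ and $\mathcal{A}=\bigoplus_{r}\bigoplus_{j}J_{\alpha_r}(\lambda)$ as polynomials in one another (truncated exponential and logarithm), you get equality of commutants in one stroke. This is more conceptual and shorter; the paper's argument, on the other hand, is more self-contained in that it never needs the inverse (logarithm) direction, only the forward inclusion together with a dimension count.
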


\begin{proof}
We have $e^{J_{\alpha}(\lambda)}=e^{\lambda }e^{J_{\alpha}(0)}$ with $e^{J_{\alpha}(0)}=\sum_{j=0}^{\alpha }\frac{1}{\alpha !}\big(J_{\alpha}(0)\big)^{j}$ and $e^{J_{\alpha}(0)}=S_{\alpha}J_{\alpha}(1)S_{\alpha}^{-1}$ for some transition matrix $S_{\alpha}$. Hence $e^{J_{\alpha_r}(\lambda)}Y =Y e^{J_{\alpha_s}(\lambda)}$ is equivalent to
\begin{equation}\label{eJ0XXeJ0}
e^{J_{\alpha_r}(0)}Y =Y e^{J_{\alpha_s}(0)},
\end{equation}
and it further transforms to
\begin{equation}\label{J1XXJ1}
J_{\alpha_r}(1)\widetilde{Y} =\widetilde{Y} J_{\alpha_s}(1), \qquad \widetilde{Y}=S^{-1}_{\alpha_r}YS_{\alpha_s}.
\end{equation}
By Proposition \ref{resAoXXA} (\ref{resAoXXA2}) every solution $\widetilde{Y}$ of (\ref{J1XXJ1}) is of the form (\ref{QTY}), 
so the dimension of the solution space of (\ref{J1XXJ1}) (as well as (\ref{eJ0XXeJ0})) is $b_{rs}=\min\{\alpha_r,\alpha_s\}$. On the other hand, since upper triangular Toeplitz matrices commute, it is easy to verify that any matrix of the form (\ref{QTY}) solves (\ref{eJ0XXeJ0}).
Since these solutions form a subspace of (maximal) dimension $b_{rs}$, it is thus equal to the whole solution space. 
\end{proof}

We also recall a technical lemma \cite[Lemma 3.3 (1)]{TSOC} (cf. \cite[Sec. 3.1]{Lin} and \cite[Sec. 2]{TSOS}), which enables us to transform solutions of a Sylvester equation (block matrices with upper triangular Toeplitz blocks) into block upper triangular Toeplitz matrices (of the form (\ref{0T0})). It relies on the permutation matrices
\begin{equation}\label{perS}
\Omega_{\alpha,m}:=\left[e_1\;e_{\alpha+1}\;\ldots\;e_{(m-1)\alpha+1}\;e_2\;e_{\alpha+2}\;\ldots\;e_{(m-1)\alpha+2}\;\ldots\;e_{\alpha}\;e_{2\alpha}\;\ldots\;e_{\alpha m}\right],
\end{equation}
where $e_1,
\ldots,e_{\alpha m}$ is the standard orthonormal basis in $\mathbb{C}^{\alpha m}$. Post-multiplication by $\Omega_{\alpha,m}$ (pre-multiplication by $\Omega_{\alpha,m}^{T}$) puts the $k$-th, the $(\alpha+k)$-th, \ldots, the $((m-1)\alpha+k)$-th column (row) together for all $ k \in \{1,\ldots, \alpha\}$. For example (cf. Theorem \ref{stabr}):

\[
\Omega_{3,2}^{T}\begin{bmatrix}[cc|cc|cc]
a_1 & b_1 & a_2 & b_2 & a_3 & b_3 \\
0   & a_1 & 0   & a_2 & 0   & a_3\\
0   & 0   & 0   & 0   & 0   &  0 \\
\hline
a_4 & b_4 & a_5 & b_5 & a_6 & b_6 \\
0   & a_4 & 0   & a_5 & 0   & a_6\\
0   & 0   & 0   & 0   & 0   &  0 
\end{bmatrix}\Omega_{2,3}
=
\begin{bmatrix}[ccc|ccc]
a_1 & a_2 & a_3 & b_1 & b_2 & b_3 \\
a_4 & a_5 & a_6   & b_4 & b_5   & b_6\\
\hline
0   & 0   & 0   & a_1   & a_2   &  a_3 \\
0 &  0 &   0 & a_4 & a_5 & a_6 \\
\hline
0   & 0 & 0   & 0 & 0   & 0\\
0   & 0   & 0   & 0   & 0   &  0 
\end{bmatrix}.
\]
%
%

\begin{lemma}\label{lemaP}
Suppose
$X=[X_{rs}]_{r,s=1}^{N}$ is an $N$-by-$N$ block matrix whose block $X_{rs}=[(X_{rs})_{jk}]_{j,k=1}^{m_r,m_s}$ is an $m_r$-by-$m_s$ block matrix with blocks of size $\alpha_r\times \alpha_s$ and such that
\[
(X_{rs})_{jk}=
\left\{
\begin{array}{ll}
[0\quad T_{jk}^{rs}], & \alpha_{r}<\alpha_{s}\\
\begin{bmatrix}
T_{jk}^{rs}\\
0
\end{bmatrix}, & \alpha_{r}>\alpha_{s}\\
T_{jk}^{rs},& \alpha_{r}=\alpha_{s}
\end{array}\right., \qquad
\begin{array}{l}
T_{jk}^{rs}=T(a_{0,jk}^{rs},a_{1,jk}^{rs},\ldots,a_{b_{rs}-1,jk}^{rs})\in \mathbb{C}^{b_{rs}\times b_{rs}},\\
\\
b_{rs}:=\min \{\alpha_r,\alpha_s\},
\end{array}
\]
and let $\Omega=\bigoplus_{r=1}^{N}\Omega_{\alpha_r,m_r}$. Then 
\vspace{-1mm}
\begin{align}\label{0T02}
&\mathcal{X}:=\Omega^{T}X\Omega, \qquad \mathcal{X}=[\mathcal{X}_{rs}]_{r,s=1}^{N},\quad
\mathcal{X}_{rs}=
\left\{
\begin{array}{ll}
[0\quad \mathcal{T}_{rs}], & 
\alpha_r<\alpha_s\\
\begin{bmatrix}
\mathcal{T}_{rs}\\
0
\end{bmatrix}, & \alpha_r>\alpha_s\\
\mathcal{T}_{rs},& \alpha_r=\alpha_s
\end{array}\right.,
\end{align}
where $\mathcal{X}_{rs}$ is of size $\alpha_r \times \alpha_s$ and $\mathcal{T}_{rs}=
T(A_0^{rs},\ldots,A_{b_{rs}-1}^{rs})$ 
with $ A_{n}^{rs}:=[a_{n,jk}^{rs}]_{j,k=1}^{m_r,m_s}$.
\end{lemma}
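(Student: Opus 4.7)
The lemma is really an identity about how the permutation $\Omega_{\alpha,m}$ implements the canonical swap $\mathbb{C}^{m}\otimes \mathbb{C}^{\alpha}\simeq \mathbb{C}^{\alpha}\otimes \mathbb{C}^{m}$, so the plan is to reduce the claim to a single entrywise identity and then verify it on the given block structure.

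First I would set up a double indexing convention. Any $q\in \{1,\ldots,\alpha m\}$ can be written uniquely as $q=(j-1)\alpha+\ell$ with $j\in\{1,\ldots,m\}$, $\ell\in \{1,\ldots,\alpha\}$, and also as $q=(\ell-1)m+j$ with the same pair $(j,\ell)$. Reading (\ref{perS}) off directly shows that the $((\ell-1)m+j)$-th column of $\Omega_{\alpha,m}$ equals $e_{(j-1)\alpha+\ell}$, so that for any matrix $Y$ of appropriate size one has the key entrywise identity
\[
\bigl(\Omega_{\alpha_r,m_r}^{T}\,Y\,\Omega_{\alpha_s,m_s}\bigr)_{(\ell,j),(p,k)} \;=\; Y_{(j,\ell),(k,p)},
\]
where on the right $X_{rs}$ is indexed by the ``$m_r$-outer, $\alpha_r$-inner'' convention (row: $(j-1)\alpha_r+\ell$; column: $(k-1)\alpha_s+p$), and on the left $\mathcal{X}_{rs}$ is indexed by the ``$\alpha_r$-outer, $m_r$-inner'' convention (row: $(\ell-1)m_r+j$; column: $(p-1)m_s+k$).

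With this identity in hand, I would verify the conclusion block by block for the three shapes of $X_{rs}$. When $\alpha_r=\alpha_s$, the entry $(X_{rs})_{(j,\ell),(k,p)}=(T_{jk}^{rs})_{\ell,p}$ equals $a_{p-\ell,jk}^{rs}$ for $\ell\le p$ and vanishes otherwise; after the swap, the outer $(\ell,p)$-block of $\mathcal{X}_{rs}$ has $(j,k)$-entry $a_{p-\ell,jk}^{rs}=(A_{p-\ell}^{rs})_{jk}$, which is exactly the upper triangular block Toeplitz $\mathcal{T}_{rs}=T(A_0^{rs},\ldots,A_{\alpha_r-1}^{rs})$. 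When $\alpha_r<\alpha_s$, the first $\alpha_s-\alpha_r$ inner columns of each $(X_{rs})_{jk}$ are zero, which, via the entrywise identity, translates into the first $\alpha_s-\alpha_r$ outer block columns of $\mathcal{X}_{rs}$ being zero; the remaining $\alpha_r$ outer block columns reduce to the equal-size case and yield $\mathcal{T}_{rs}=T(A_0^{rs},\ldots,A_{\alpha_r-1}^{rs})$. The case $\alpha_r>\alpha_s$ is dual, with outer zero block rows appended at the bottom.

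The main (and really only) obstacle is keeping the two competing lexicographic orderings of the double index straight; once the column formula for $\Omega_{\alpha,m}$ is nailed down, the three cases reduce to pure bookkeeping, and the sizes $b_{rs}=\min\{\alpha_r,\alpha_s\}$ of the nontrivial Toeplitz pieces automatically match on both sides.
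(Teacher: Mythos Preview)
Your argument is correct: the key entrywise identity
\[
\bigl(\Omega_{\alpha_r,m_r}^{T}\,Y\,\Omega_{\alpha_s,m_s}\bigr)_{(\ell-1)m_r+j,\,(p-1)m_s+k}
\;=\;
Y_{(j-1)\alpha_r+\ell,\,(k-1)\alpha_s+p}
\]
follows directly from the column description of $\Omega_{\alpha,m}$ in (\ref{perS}), and your case analysis for the three shapes $\alpha_r\lessgtr\alpha_s$, $\alpha_r=\alpha_s$ is the right bookkeeping to extract the block upper triangular Toeplitz structure of $\mathcal{X}_{rs}$.

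As for comparison with the paper: the paper does not actually prove Lemma~\ref{lemaP}. It is merely \emph{recalled} as a known technical lemma, with a citation to \cite[Lemma~3.3~(1)]{TSOC} (and related computations in \cite{Lin}, \cite{TSOS}), and only the illustrative $3\times 2$ example preceding the statement is offered by way of explanation. So your direct entrywise verification is exactly the kind of proof one would expect; it supplies what the paper omits.
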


We are ready to prove our main results. Here is the outline of the proof. 
The isotropy group at skew-symmetric or orthogonal normal form $\mathcal{A}^{}$ (under orthogonal similarity) consists 
of all orthogonal solutions $Q$ of the Sylvester equation 
%
\begin{equation}\label{HQQH}
\mathcal{A}^{}Q= Q\mathcal{A}^{}.
\end{equation}
This equation is then transformed into a simpler equation
\begin{equation}\label{JQQJ}
\mathcal{J}^{}X= X\mathcal{J}^{}, \quad  \quad
X=U Q U^{-1} \qquad \quad \quad
(\mathcal{A}=U^{-1}\mathcal{J}U^{}),
\end{equation}
where $\mathcal{J}$ is a suitable similarity normal form (e.g. the Jordan canonical form) for $\mathcal{A}$ and $U$ is a transition matrix. 
By applying Proposition \ref{resAoXXA} (\ref{resAoXXA2}) we obtain all $X$ that solve the equation (\ref{JQQJ}). 
Next, we need to figure out which of these solutions provide the orthogonal solutions $Q=U^{-1} X U$ of (\ref{HQQH}). This yields an equation:
\begin{align}\label{QK}
I=                   & \big((U^{})^{T}X^{T} (U^{-1})^{T}\big)   \big(  U^{-1} X U \big),  \qquad \qquad (I=Q^TQ),\\
(U^{-1})^{T} U^{-1}=  & X^{T} \big((U^{-1})^{T} U^{-1}\big) X.\nonumber
\end{align}
Combining the equations (\ref{JQQJ}) and (\ref{QK}) is thus at the core of the problem. 
Eventually, it will be reduced to a certain matrix equation with matrices of the form (\ref{0T0}) and we possibly solve it by using Lemma \ref{EqT}.  
This part of the proof seems to be considerably more involved than in \cite{TSOC}, \cite{TSOS}.

\begin{proof}[Proof of Theorem \ref{stabw} (\ref{stabw1})]
Suppose
\begin{equation}
\label{K1l}
\mathcal{K}^{}
=\bigoplus_{r=1}^{N}\big( \bigoplus_{j=1}^{m_r}  K_{\alpha_r}(\lambda)\big),\qquad \lambda\in \mathbb{C}\setminus\{0\},
\end{equation}
where $K_{\alpha_r}(\lambda)$ is as in (\ref{Km}) for $m=\alpha_r$. The Jordan canonical form of $\mathcal{K}^{}$ is
\begin{align}\label{JCF1}
\mathcal{J}=\bigoplus_{r=1}^{N}\Big( \bigoplus_{j=1}^{m_r} \big( J_{\alpha_r}(\lambda)\oplus  J_{\alpha_r}(-\lambda)\big)\Big),\qquad \qquad (\mathcal{K}=S^{-1}\mathcal{J}S),
\end{align}
with the corresponding transition matrix 
\begin{align}\label{SRP}
S=\bigoplus_{r=1}^{N}\big( \bigoplus_{j=1}^{m_r} S_{2\alpha_r}\big),\quad S_{2\alpha}:=R_{2\alpha} P_{2\alpha}, \quad
P_{\alpha}:=\tfrac{1}{\sqrt{2}}(I_{\alpha}+iE_{\alpha}), \,\,\,
R_{2\alpha}:=I_{\alpha} \oplus F_{\alpha},
\end{align}
where $E_\alpha :=E_\alpha (1)$ is the backward-identity matrix of size $\alpha\times \alpha$ (see (\ref{BBF})) and $F_{\alpha}:=\oplus_{k=1}^{\alpha}(-1)^{k}$ (i.e. diagonal entries alternate between minus and plus one).

We conjugate $\mathcal{J}$ by an appropriate permutation matrix to collect together blocks corresponding to the same eigenvalue ($\lambda$ or $-\lambda$).
We use an analog of (\ref{perS}) (with $\alpha=2$) 
for block matrices. Define a $2m$-by-$2m$ block  permutation matrix whoose blocks are identities and zero matrices:
\begin{equation}\label{perS2}
\Omega_{2,m}(I_{\alpha}):=\left[e_1(I_{\alpha})\;\,e_{3}(I_{\alpha})\;\,\ldots\;\,e_{2m-1}(I_{\alpha})\;\,e_2(I_{\alpha})\;\,e_{4}(I_{\alpha})\;\,\ldots\;\,e_{2m}(I_{\alpha})\right],
\end{equation}
in which $e_j(I_{\alpha})$ is a column ($n$ rows) with $I_{\alpha}$ in the $j$-th row and ${\alpha}$-by-${\alpha}$ zero matrices otherwise.
Post-multiplication by $\Omega_{2,m}(I_{\alpha})$
(with $\Omega_{2,2m}^{T}$)
puts the 1st, the 3rd, \ldots, the $(2m-1)$-th column, the 2nd, the 4th,\ldots, the $2m$-th (row) together. 
Thus:
\begin{align}\label{J1F}
\widetilde{\Omega}_1^{T}\mathcal{J}\widetilde{\Omega}_1=\bigoplus_{r=1}^{N}\Big( \bigoplus_{j=1}^{m_r} J_{\alpha_r}(\lambda)\oplus \bigoplus_{j=1}^{m_r} J_{\alpha_r}(-\lambda)\Big), \qquad \widetilde{\Omega}_1:=\bigoplus_{r=1}^{N} \Omega_{2,2m_r}(I_{\alpha_r}).
\end{align}
Similarly as in (\ref{perS2}), we can also permute blocks of possibly different dimensions:
\small
\begin{align}\label{perK2}
\widetilde{\Omega}_2=\left[e_1(I_{m_1\alpha_1})\;e_{3}(I_{m_2\alpha_2})\;\ldots\;e_{2N-1}(I_{m_N\alpha_N})\;e_2(I_{m_1\alpha_1})\;e_{4}(I_{m_2\alpha_2})\;\ldots\;e_{2N}(I_{m_N\alpha_N})
\right], 
\end{align}
\normalsize
where $e_j(I_{m_r\alpha_r})$ is a column ($2N$ rows) with $I_{m_r\alpha_r}$ in the $j$-th row and the appropriate zero matrices otherwise.
Multiplication by $\widetilde{\Omega}_2$ (by $\widetilde{\Omega}_2^{T}$) from the right (left)
arranges columns (rows) in the same order as would $\Omega_{2,N}(I_{\alpha})$.
From (\ref{J1F}) we get
\begin{equation}\label{J2F}
\widetilde{\mathcal{J}}:=\widetilde{\Omega}^{T}\mathcal{J}\widetilde{\Omega}=
\bigoplus_{r=1}^{N}\Big( \bigoplus_{j=1}^{m_r} J_{\alpha_r}(\lambda)\Big)\oplus 
\bigoplus_{r=1}^{N}\Big( \bigoplus_{j=1}^{m_r}  J_{\alpha_r}(-\lambda)\Big), \qquad 
\widetilde{\Omega}:=\widetilde{\Omega}_1\widetilde{\Omega}_2.
\end{equation}
%
The equation (\ref{HQQH}) for $\mathcal{A}=\mathcal{K}$ (see (\ref{K1l})) transforms to (\ref{JQQJ}) for $\mathcal{J}=\widetilde{\mathcal{J}}$, $U=\widetilde{\Omega}^{T} S$:
\begin{equation}\label{wJQQwJ}
\widetilde{\mathcal{J}}X=X\widetilde{\mathcal{J}}, \qquad X=(\widetilde{\Omega}^{T} S) Q (S^{-1}\widetilde{\Omega}), 
\qquad \qquad (Q \textrm{ orthogonal}).
\end{equation}
%
Proposition \ref{resAoXXA} gives the solution $X=X_1\oplus X_2$ of (\ref{wJQQwJ}), where $X_1,X_2$ are $N$-by-$N$ block matrices such that their blocks $(X_1)_{rs}$, $(X_2)_{rs}$ are $m_r$-by-$m_s$ block matrices whose blocks are rectangular Toeplitz of size $\alpha_r\times \alpha_s$
and of the form (\ref{QTY}).

In view of (\ref{QK}) orthogonality of $Q=(S^{-1}\widetilde{\Omega}) X (\widetilde{\Omega}^{T} S)$ yields the equation
%
\begin{align}\label{QTQH}
\widetilde{\Omega}^{T}(S^{-1})^{T}S^{-1}\widetilde{\Omega} =   &  X^T \big(\widetilde{\Omega}^{T}(S^{-1})^{T} S^{-1} \widetilde{\Omega}\big) X.
\end{align}
Since $P_{2\alpha}=P_{2\alpha}^T$, $P_{2\alpha}^{-2}=iE_{2\alpha}$, $R_{2\alpha}=R_{2\alpha}^{T}=R_{2\alpha}^{-1}=I_{\alpha}\oplus F_{\alpha}$, we get
\begin{align}\label{s2a}
(S_{2\alpha}^{-1})^{T}S_{2\alpha}^{-1}=& 
(R_{2\alpha}^{-1})^{T}(P_{2\alpha}^{-1})^{T}P_{2\alpha}^{-1}R_{2\alpha}^{-1}
=
R_{2\alpha}P_{2\alpha}^{-2}R_{2\alpha}
%
%
=(I_{\alpha} \oplus F_{\alpha})(iE_{2\alpha})(I_{\alpha} \oplus F_{\alpha})=   \nonumber\\
= &
i\begin{bmatrix}
0 & E_{\alpha}F_{\alpha}\\
F_{\alpha}E_{\alpha} & 0 
\end{bmatrix}
= 
i\big(E_{\alpha}F_{\alpha}\oplus (E_{\alpha}F_{\alpha})^{T}\big)
\begin{bmatrix}
0 & I_{\alpha}\\
I_{\alpha} & 0 
\end{bmatrix}.  
\end{align}
Thus we can decompose $(S^{-1})^{T}S^{-1} $ into a product of block diagonal matrices $iD$, $V$ with diagonal and anti-diagonal blocks, respectively; this is the trick of the proof:
%
\begin{equation}\label{deDV}
(S^{-1})^{T}S^{-1}
=\bigoplus_{r=1}^{N}\left(\bigoplus_{j=1}^{m_r} 
i
\Big(
\big(E_{\alpha_r}F_{\alpha_r}\oplus (E_{\alpha_r}F_{\alpha_r})^{T}\big)
\begin{bmatrix}
0 & I_{\alpha_r}\\
I_{\alpha_r} & 0 
\end{bmatrix}
\Big)\right)
=iDV,
\end{equation}
where 
$D:=\bigoplus_{r=1}^{N}\big(\bigoplus_{j=1}^{m_r}  (E_{\alpha_r}F_{\alpha_r}\oplus (E_{\alpha_r}F_{\alpha_r})^{T})  \big)$ and 
$V:=\bigoplus_{r=1}^{N}\big(\bigoplus_{j=1}^{m_r}  
\begin{bsmallmatrix}
0 & I_{\alpha_r}\\
I_{\alpha_r} & 0 
\end{bsmallmatrix}
 \big)$.

Furthermore, (\ref{deDV}) yield the following decomposition of $\widetilde{\Omega}^{T} \big((S^{-1})^{T}S^{-1}\big) \widetilde{\Omega}$:
\begin{align*}
&\widetilde{\Omega}^{T} \big((S^{-1})^{T}S^{-1}\big
) \widetilde{\Omega}=i(\widetilde{\Omega}^{T} D \widetilde{\Omega})(\widetilde{\Omega}^{T} V \widetilde{\Omega})=iBW,
\qquad n:=\sum_{r=1}^{N}\alpha_r m_r,\\
& W:=\widetilde{\Omega}^{T} V \widetilde{\Omega}=
\begin{bmatrix}
0 & I_{n}\\
I_{n} & 0 
\end{bmatrix},\quad
B:=\widetilde{\Omega}^{T} D \widetilde{\Omega}=\bigoplus_{r=1}^{N}\big(\bigoplus_{j=1}^{m_r} E_{\alpha}F_{\alpha_r}  \big)\oplus \bigoplus_{r=1}^{N}\big(\bigoplus_{j=1}^{m_r} (E_{\alpha_r}F_{\alpha_r})^{T}  \big).
\end{align*}
%
%
Hence (\ref{QTQH}) further becomes
%
\begin{align}\label{QTQH2}
iBW = &  X^{T} (i BW) X \nonumber\\
B = & X^{T} B (W X W^{-1})\nonumber\\
B_1 \oplus B_1^{T} = & (X_1^{T} \oplus X_2^{T}) (B_1 \oplus B_1^{T}) (W (X_1 \oplus X_2) W^{-1})\\
B_1 \oplus B_1^{T} = & (X_1^{T} \oplus X_2^{T}) (B_1 \oplus B_1^{T}) (X_2 \oplus X_1)\nonumber\\
B_1 = &  X^{T}_1 B_1 X_2,\qquad \quad (B_1^{T}=X^{T}_2 B_1^{T} X_1), \nonumber
\end{align}
where $B=B_1 \oplus B_1^{T}$ with $B_1:=\bigoplus_{r=1}^{N}\big(\bigoplus_{j=1}^{m_r} E_{\alpha}F_{\alpha}  \big)$.

By conjugating both hand sides of the last equation of (\ref{QTQH2}) by $\Omega$ from Lemma \ref{lemaP} and then slightly manipulating it,
we get
\begin{align}\label{ortoD3}
  \Omega^{T}B_1\Omega = & (\Omega^{T} X_1\Omega)^{T}(\Omega^{T}B_1\Omega)(\Omega^{T} X_2\Omega)\\
\mathcal{B}_1= &  \mathcal{X}^{T}_1\mathcal{B}_1 \mathcal{X}_2,\nonumber
\end{align}
where $\mathcal{X}_1:=\Omega^{T} X_1\Omega$, $\mathcal{X}_2:=\Omega^{T} X_2\Omega$ with $\mathcal{X}_1, \mathcal{X}_2\in \mathbb{T}^{\alpha,\mu}$ (cf. Proposition \ref{lemanilpo}), and $\mathcal{B}_1:= \bigoplus_{r=1}^{N}\big( E_{\alpha_r}(I_{m_r})F_{\alpha_r}(I_{m_r})\big)$, in which $E_{\alpha_r}(I_{m_r})$ is a block anti-diagonal matrix with $I_{m_r}$ on the anti-diagonal (see (\ref{BBF})), and $ F_{\alpha_r}(I_{m_r}):=\bigoplus_{j=1}^{\alpha_r}(-1)^{j-1}I_{m_r}$.
Thus, the solution of (\ref{HQQH}) for $\mathcal{A}=\mathcal{K}$ as in (\ref{K1l}) is
\begin{align}\label{QPsi}
&Q=S^{-1}\widetilde{\Omega}(X_1\oplus X_2)\widetilde{\Omega}^{T}S
=\Psi^{-1} \big(\mathcal{X}_1\oplus (\mathcal{X}^{T}_1)^{-1}\big)\Psi, \qquad \mathcal{X}_1\in \mathbb{T}^{\alpha,\mu}\\
&\qquad \Psi:=(I\oplus \mathcal{B})\big((\Omega^{T}\oplus \Omega^{T})\widetilde{\Omega}^{T}S\big).\nonumber
\end{align}

Next, let
\begin{equation*}
\mathcal{O}^{}
=\bigoplus_{r=1}^{N}\big( \bigoplus_{j=1}^{m_r}  e^{K_{\alpha_r}(\lambda)}\big), \qquad \lambda \in \mathbb{C}\setminus \{\pi k\}_{k\in \mathbb{T}},
\end{equation*}
where $K_{\alpha_r}(\lambda)$ is as in (\ref{Km}) for $m=\alpha_r$.
Since $K_{\alpha_r}(\lambda)=S_{2\alpha_r}^{-1}\big(J_{\alpha_r}(\lambda)\oplus J_{\alpha_r}(_\lambda)\big)S_{2\alpha_r}$ and $e^{K_{\alpha_r}(\lambda)}=S_{2\alpha_r}^{-1}e^{J_{\alpha_r}(\lambda)\oplus J_{\alpha_r}(-\lambda)}S_{2\alpha_r}$, we obtain that $\mathcal{O}^{}$ is similar to

\begin{align*}
\mathcal{J'}=\bigoplus_{r=1}^{N}\Big( \bigoplus_{j=1}^{m_r}\big(
e^{ J_{\alpha_r}(\lambda)\oplus  J_{\alpha_r}(-\lambda)}
 \big)\Big),\qquad \qquad (\mathcal{O}=S^{-1}\mathcal{J}'S),
\end{align*}
with the transition matrix $S$ from (\ref{SRP}). Using $\widetilde{\Omega}$ from (\ref{J2F}) we further get
\begin{equation*}
\widetilde{\mathcal{J}}':=\widetilde{\Omega}^{T}\mathcal{J}'\widetilde{\Omega}=
\bigoplus_{r=1}^{N}\Big( \bigoplus_{j=1}^{m_r} e^{ J_{\alpha_r}(\lambda)}\Big)\oplus 
\bigoplus_{r=1}^{N}\Big( \bigoplus_{j=1}^{m_r}  e^{J_{\alpha_r}(-\lambda)}\Big). 
\end{equation*}
Then the equation (\ref{HQQH}) for $\mathcal{A}=\mathcal{O}$ transforms to (\ref{JQQJ}) for $\mathcal{J}=\widetilde{\mathcal{J}}'$:
%
\begin{equation*}
\widetilde{\mathcal{J}}'X=X\widetilde{\mathcal{J}}', \qquad X=(\widetilde{\Omega}^{T} S) Q (S^{-1}\widetilde{\Omega}), \qquad \qquad (Q \textrm{ orthogonal}).
\end{equation*}
Lemma \ref{resAoXXAo} implies that the solution $X$ of this equation coincides with the solution of (\ref{wJQQwJ}). Furthermore, orthogonality of $Q$ leads to (\ref{QTQH}) and eventually we obtain (\ref{QPsi}). It concludes the proof of Theorem \ref{stabw} (\ref{stabw1}).
\end{proof}

\begin{proof}[Proof of Corrolary \ref{stabr}]
Due to Proposition \ref{stabs11} (\ref{stabs11a}), (\ref{stabs11ao}) it suffices to consider the case
\small
\begin{equation}\label{KOr}
\mathcal{K}=\bigoplus_{j=1}^{m}
\begin{bmatrix}
0 & \sigma\\
-\sigma & 0 
\end{bmatrix},
\quad \sigma\in \mathbb{R}\setminus \{0\}, \qquad
\mathcal{O}=
\bigoplus_{j=1}^{m}
\begin{bmatrix}
\cos (\sigma) & \sin (\sigma)\\
-\sin (\sigma) & \cos (\sigma)
\end{bmatrix},
\quad \sigma \neq \{2k\pi\}_{k\in \mathbb{Z}}.
\end{equation}
\normalsize
%
%
We apply the proof of Theorem \ref{stabw} for a simple case $N=\alpha_1=1$, $m_1=m$, $\lambda=i\sigma$ with $\sigma \in \mathbb{R}\setminus \{0\}$ ($e^{i\sigma}\neq 1$). Now, $\widetilde{\Omega}_2=I_{2m}$ in (\ref{perK2}), hence $\widetilde{\Omega}=\Omega_{2,m}$ in (\ref{J2F}). Furthermore, from (\ref{QPsi}) we deduce the solution $Q$ of the equation (\ref{HQQH}) for $\mathcal{A}=\mathcal{K}$ and $\mathcal{A}=\mathcal{O}$ (from (\ref{KOr})):
\begin{equation}\label{QX1X2S}
Q=S^{-1}\Omega_{2,m}(X_1\oplus (X_1^{-1})^{T}) \Omega_{2,m}^{T}  S, \quad
X_1\in \GL_{m}(\mathbb{C}), \,\,\,\,\,
S= \bigoplus_{j=1}^{m}
\tfrac{1}{\sqrt{2}}\begin{bsmallmatrix}
1 & i\\
-i & -1 
\end{bsmallmatrix}.
\end{equation}
Observe further that
%
\begin{align*}
&\Omega_{2,m}^{T}Q \Omega_{2,m}= (\Omega_{2,m}^{T} S^{-1}\Omega_{2,m}) (X_1\oplus X_2) (\Omega_{2,m}^{T} S \Omega_{2,m})=\\
&=
\frac{1}{2}\begin{bmatrix}
I_{m} & iI_{m} \\
-iI_{m} & -I_{m} 
\end{bmatrix}
(X_1\oplus (X_1^{-1})^{T})
\begin{bmatrix}
I_{m} & iI_{m} \\
-iI_{m} & -I_{m} 
\end{bmatrix}
=  \frac{1}{2}\begin{bmatrix}
X_1+(X_1^{-1})^{T} & i(X_1-(X_1^{-1})^{T}) \\
-i(X_1-(X_1^{-1})^{T})  & X_1+(X_1^{-1})^{T}
\end{bmatrix}.
\end{align*} 
\normalsize
It follows that $Q$ is real if and only if $X_1-(X_1^{-1})^{T}$ is purely imaginary and $X_1+(X_1^{-1})^{T}$ is real, which holds precisely when $X_1^{-1}=\overline{X}_1^{T}$.
It concludes the proof.
\end{proof}

\begin{proof}[Proof of Theorem \ref{stabw} (\ref{stabw2})]
Let $K_{\alpha_r}(0)$, $L_{\alpha_r}$ (with $\alpha_r$ odd) be as in (\ref{Km}), (\ref{Lm}), and let
\begin{equation}\label{Kcase2}
\mathcal{K}^{} 
=\bigoplus_{r=1}^{N}\big( \bigoplus_{j=1}^{m_r} \widetilde{K}_r \big),
\qquad \widetilde{K}_{r}:=
\left\{
\begin{array}{ll}
\hspace{-1mm}K_{\alpha_r}(0),   &  \hspace{-1mm} 
\alpha_r \textrm{ even}\\
\hspace{-1mm}L_{\alpha_r},             &   \hspace{-1mm} 
\alpha_r \textrm{ odd}
\end{array}
\right..
\end{equation}
%
The Jordan canonical form of $\mathcal{K}$ is 
\begin{equation}\label{Jcase2}
\mathcal{J}
=\bigoplus_{r=1}^{N}\Big( \bigoplus_{j=1}^{m_r} \widetilde{J}_{r} \Big),
\quad \widetilde{J}_{r}=
\left\{
\begin{array}{ll}
\hspace{-1mm}J_{\alpha_r}(0) \oplus J_{\alpha_r}(0),   &  \hspace{-1mm} 
\alpha_r \textrm{ even}\\
\hspace{-1mm}J_{\alpha_r}(0),             &   \hspace{-1mm} 
\alpha_r \textrm{ odd}
\end{array}
\right.,
\qquad (\mathcal{K}=S^{-1}\mathcal{J}S),
\end{equation}
where the corresponding transition matrix is:
\begin{align}\label{SRP2}
& S=
\bigoplus_{r=1}^{N}\Big( \bigoplus_{j=1}^{m_r}\widetilde{S}_r
\Big), \qquad
\widetilde{S}_r
:=
\left\{
\begin{array}{ll}
R_{2\alpha_r}P_{2\alpha_r}, &  \alpha_r \textrm{ even}\\
R_{\alpha_r}P_{\alpha_r}, &  \alpha_r \textrm{ odd}
\end{array}
\right.,
\end{align}
in which $R_{\alpha}:=
\left\{
\begin{array}{ll}
I_{\alpha} \oplus F_{\alpha}, &  \alpha \textrm{ even}\\
I_{(\alpha+1)/2} \oplus F_{(\alpha-1)/2}, &  \alpha \textrm{ odd}
\end{array}
\right.$, and $P_{\alpha}$, $F_{\alpha}$ are defined by (\ref{SRP}).

The equation (\ref{HQQH}) for $\mathcal{A}=\mathcal{K}$ as in (\ref{Kcase2}) transforms to (\ref{JQQJ}) for $\mathcal{J}$ as in (\ref{Jcase2}) and $U=S$ (cf. (\ref{SRP2})):
\begin{equation}\label{JXXJS}
\mathcal{J}X=X\mathcal{J}, \qquad X=SQS^{-1} \qquad \quad (Q \textrm{ orthogonal}).
\end{equation}
Proposition \ref{resAoXXA} (\ref{resAoXXA1}) yields the solution of this equation $X=[X_{rs}]_{r,s=1}^{N}$ with an $\widetilde{m}_r$-by-$\widetilde{m}_s$ block matrix $X_{rs}$ whose blocks are $\alpha_r$-by-$\alpha_s$ rectangular Toeplitz
of the form (\ref{QTY}), where 
\small
$
\widetilde{m}_r:=\left\{
\begin{array}{ll}
2m_r,  &  \alpha_r \textrm{ even} \\
m_r, &  \alpha \textrm{ odd}
\end{array}
\right.
$.
\normalsize
Furthermore, the orthogonality of $Q$ is characterized by the equation (\ref{QK}) for $U=S$ (see (\ref{SRP2})):
%
\begin{align}\label{QTQcase2}
(S^T)^{-1}S^{-1} =   &  X^T \big((S^T)^{-1}S^{-1}\big) X.
\end{align}

We now apply the same trick as in the proof of Theorem \ref{stabw} (\ref{stabw1}) and write $(S^{-1})^{T}S^{-1}$ as a product of block diagonal matrices 
one with diagonal and one with anti-diagonal blocks. First, set $S_{\alpha}:=R_{\alpha}P_{\alpha}$. Using the computation (\ref{s2a}) we then get
\begin{align}\label{sodS}
(S_{2\beta}^{-1})^{T}S_{2\beta}^{-1}
=& 
i\begin{bmatrix}
0 & E_{\beta}F_{\beta}\\
F_{\beta}E_{\beta} & 0 
\end{bmatrix}
= 
i
(E_{\beta} \oplus E_{\beta})
\begin{bmatrix}
0 & F_{\beta}\\
- F_{\beta} & 0 
\end{bmatrix},
\end{align}
and similarly by applying $P_{\alpha}=P_{\alpha}^T$, 
$P_{\alpha}^{-2}=iE_{\alpha}$, 
$R_{\alpha}=R_{\alpha}^{T}=R_{\alpha}^{-1}$ we obtain:
\begin{align}\label{lihS}
(S_{2\beta-1}^{-1})^{T}S_{2\beta-1}^{-1}=& 
(R_{2\beta-1}^{-1})^{T}(P_{2\beta-1}^{-1})^{T}P_{2\beta-1}^{-1}R_{2\beta-1}^{-1}
=
R_{2\beta-1}P_{2\beta-1}^{-2}R_{2\beta-1}
\nonumber\\
= & 
(I_{\beta} \oplus F_{\beta-1})(iE_{2\beta-1})(I_{\beta} \oplus F_{\beta-1})
=
i \begin{bmatrix}
0 & 0 & E_{\beta-1} F_{\beta-1}\\
0 & 1 & 0\\
F_{\beta-1}E_{\beta-1} & 0 & 0
\end{bmatrix}\\
= &
i \begin{bmatrix}
0 & 0 & E_{\beta-1}\\
0 & 1 & 0\\
E_{\beta-1} & 0 & 0
\end{bmatrix}
(E_{\beta-1} F_{\beta-1}E_{\beta-1} \oplus 1\oplus F_{\beta-1})
= 
i E_{2\beta-1} \big((-1)^{\beta} F_{2\beta-1}\big).\nonumber
\end{align}
From (\ref{sodS}), (\ref{lihS}) it follows that
%
\begin{align*}
(S^{-1})^{T} & S^{-1}  =i EB,\\
E:=\bigoplus_{r=1}^{N}\big(\bigoplus_{j=1}^{\widetilde{m}_r}  E_{\alpha_r}\big), \qquad
B:=\bigoplus_{r=1}^{N}\big(\bigoplus_{j=1}^{m_r} 
\widetilde{B}_r \big),&
\quad
\widetilde{B}_r:=\left\{
\begin{array}{ll}
\begin{bsmallmatrix}
0 & F_{\alpha_r}\\
- F_{\alpha_r} & 0 
\end{bsmallmatrix}, &  \alpha_r \textrm{ even}\\
(-1)^{(\alpha_r+1)/2} F_{\alpha_r}, &  \alpha_r \textrm{ odd}
\end{array}
\right..
\end{align*}
The equation (\ref{QTQcase2}) thus transforms to
%
\begin{align}\label{QTQ2c}
iE B = &  E  X^{T}(iE B) X\\
B = & ( E  X^{T} E ) B X.\nonumber
\end{align}
%
%
We conjugate (\ref{QTQ2c}) with $\Omega=\bigoplus_{r=1}^{N}\Omega_{\alpha_r,\widetilde{m}_r}$ (see Lemma \ref{lemaP}) and manipulate it:
%
\begin{align}\label{ortoD2}
 \Omega^{T} B\Omega = & \big(\Omega^{T}E\Omega \big)\big(\Omega^{T} X\Omega\big)^{T}\big(\Omega^{T}E\Omega\big)\big(\Omega^{T}B \Omega\big)\big(\Omega^{T} X\Omega\big)\\
  \mathcal{B} = &\mathcal{F}\mathcal{X}^{T}\mathcal{F}\mathcal{B}\mathcal{X}; \nonumber
\end{align}
where $\mathcal{F}:= \Omega^{T}E\Omega=\bigoplus_{r=1}^{N} E_{\alpha_r}(I_{\widetilde{m}_r})$, $\mathcal{X}=\Omega^{T}X\Omega$ is of the form (\ref{0T0}) for $\alpha=(\alpha_1,\ldots,\alpha_N)$, $\mu=\widetilde{\mu}=:(\widetilde{m}_1,\ldots,\widetilde{m}_N)$, and we have set
\begin{align*}
\mathcal{B}:= & \Omega^{T}B\Omega=\bigoplus_{r=1}^{N} 
\big(
\bigoplus_{j=1}^{\alpha_r}
(-1)^{j-1}B_r
\big),\qquad
B_r:=\left\{
\begin{array}{ll}
\bigoplus_{j=1}^{m_r}
\begin{bsmallmatrix}
0 & -1\\
1 & 0 
\end{bsmallmatrix}, &  \alpha_r \textrm{ even}\\
(-1)^{(\alpha_r-1)/2} I_{\alpha_r}, &  \alpha_r \textrm{ odd}
\end{array}
\right..
\end{align*}
The solution of (\ref{HQQH}) for $\mathcal{A}=\mathcal{K}$ (see (\ref{Kcase2})) is thus $Q=\Psi^{-1}\mathcal{X}\Psi$ for $\Psi:=\Omega^{T}S$ (cf. (\ref{JXXJS})), with $\mathcal{X}$ a solution of (\ref{ortoD2}), which is obtained by applying  Lemma \ref{EqT}.

We proceed with
\begin{equation}\label{Kcase2o}
\mathcal{O}^{} 
=\varepsilon \bigoplus_{r=1}^{N}\big( \bigoplus_{j=1}^{m_r} e^{\widetilde{K}_r} \big),\qquad \varepsilon\in\{-1,1\},
\end{equation}
recall that $K_{\alpha_r}(0)$, $L_{\alpha_r}$, $\widetilde{K}_{r}$ are defined in (\ref{Km}), (\ref{Lm}), (\ref{Kcase2}). It is similar to 
\begin{equation}\label{Jcase2o}
\mathcal{J}'
=\bigoplus_{r=1}^{N}\Big( \bigoplus_{j=1}^{m_r} e^{\widetilde{J}_{r}} \Big),
\quad \widetilde{J}_{r}:=
\left\{
\begin{array}{ll}
\hspace{-1mm}J_{\alpha_r}(0) \oplus J_{\alpha_r}(0),   &  \hspace{-1mm} 
\alpha_r \textrm{ even}\\
\hspace{-1mm}J_{\alpha_r}(0),             &   \hspace{-1mm} 
\alpha_r \textrm{ odd}
\end{array}
\right.
\qquad (\mathcal{O}=S^{-1}\mathcal{J}'S),
\end{equation}
in which the corresponding transition matrix $S$ is as in (\ref{SRP2}). The equation (\ref{HQQH}) for $\mathcal{A}=\mathcal{O}$ as in (\ref{Kcase2o}) transforms to (\ref{JQQJ}) for $\mathcal{J}=\widetilde{\mathcal{J}}'$ as in (\ref{Jcase2o}):
%
\begin{equation}\label{wJQQwJo2}
\widetilde{\mathcal{J}}'X=X\widetilde{\mathcal{J}}', \qquad X=S Q S^{-1}, \qquad \qquad (Q \textrm{ orthogonal}).
\end{equation}
Applying Lemma \ref{resAoXXAo} to (\ref{wJQQwJo2}) gives the same solution $X$ as obtained for the equation (\ref{JXXJS}), while orthogonality of $Q$ yields the equation (\ref{QTQcase2}). The rest of the proof works mutatis mutandis as above for (\ref{HQQH}) with $\mathcal{A}=\mathcal{K}$.
\end{proof}
%
%
%
%




\end{document}